\newtheorem{theorem}{Theorem}
\newtheorem*{theorem*}{Theorem}
\newtheorem{lemma}{Lemma}
\newtheorem*{lemma*}{Lemma}
\theoremstyle{definition}
\newtheorem{remark}{Remark}
\newtheorem*{remark*}{Remark}
\newtheorem{corollary}{Corollary}
\newtheorem*{corollary*}{Corollary}
\DeclareMathOperator{\Ric}{Ric}
\newcommand{\R}{\mathbb{R}}
\newcommand{\td}{\mathrm{d}}
\newcommand{\lra}{\longrightarrow}
\newcommand{\ez}{\varepsilon}
\newcommand{\bs}{\ensuremath{\mathbf{s}}}
\newcommand{\e}{\mathrm{e}}
\renewcommand{\d}{\mathrm{d}}
\newcommand{\eps}{{\varepsilon}}
\title{Some functional inequalities under lower Bakry-\'{E}mery-Ricci curvature bounds with $\eps$-range}
\author{Yasuaki Fujitani\thanks{Department of Mathematics, Osaka University, Osaka 560-0043, Japan (\texttt{u197830k@ecs.osaka-u.ac.jp})}}
\begin{document}
\maketitle
\begin{abstract}
For $n$-dimensional weighted Riemannian manifolds, lower $m$-Bakry-\'{E}mery-Ricci curvature bounds with $\eps$-range, introduced by Lu-Minguzzi-Ohta \cite{lu}, integrate constant lower bounds and certain variable lower bounds in terms of weight functions.
In this paper, we prove a Cheng type inequality and a local Sobolev inequality under lower $m$-Bakry-\'{E}mery-Ricci curvature bounds with $\eps$-range. 
These generalize those inequalities under constant curvature bounds for $m \in (n,\infty)$ to $m\in(-\infty,1]\cup\{\infty\}$.
\end{abstract}
\tableofcontents
\section{Introduction}
The Ricci curvature plays an important role in geometric analysis.
For example, lower bounds of Ricci curvature imply comparison theorems such as the Laplacian comparison theorem and Bishop-Gromov volume comparison theorem. This paper is concerned with the Bakry-\'{E}mery-Ricci curvature $\Ric_\psi^m$, which is a generalization of the Ricci curvature for weighted Riemannian manifolds and $m$ is a real parameter called the effective dimension.
The condition $\Ric_\psi^m\geq K$ for $K\in\mathbb{R}$ implies many comparison geometric results similar to those for Riemannian manifolds with Ricci curvature bounded from below by $K$ and dimension bounded from above by $m$.
Especially the case of $m\geq n$ is now classical and well investigated.
Recently, there is a growing interest in the $m$-Bakry-\'{E}mery-Ricci curvature in the case of $m \in (-\infty,1]$.
For this range, some Poincar\'{e} inequalities \cite{milman} (see also \cite{mai} for its rigidity), Beckner inequality \cite{gentil} and the curvature-dimension condition \cite{ohta-negative} were studied. 
 
It is known that some comparison theorems (such as the Bishop-Gromov volume comparison theorem and the Laplacian comparison theorem) under the constant curvature bound $\Ric_\psi^m\geq Kg$ hold only for $m\in[n,\infty)$ and fail for $m\in (-\infty,1]\cup \{\infty\}$.
Nonetheless, Wylie-Yeroshkin \cite{yoroshikin} introduced a variable curvature bound 
\begin{equation*}
    \Ric_\psi^1 \geq K\e^{-\frac{4}{n-1}\psi}g
\end{equation*}
associated with the weight function $\psi$, and established several comparison theorems. They were then generalized to 
\begin{equation*}
    \Ric_\psi^m\geq K\e^{\frac{4}{m-n}\psi}g
\end{equation*}
with $m\in (-\infty, 1)$ by Kuwae-Li \cite{kuwae_li}. In \cite{lu}, Lu-Minguzzi-Ohta gave a further generalization of the form 
\begin{equation*}
    \Ric_\psi^m\geq K\e^{\frac{4(\eps-1)}{n-1}\psi}g
\end{equation*}
for an additional parameter $\eps$ in an appropriate range, depending on $m$, called the $\eps$-range (see also \cite{lu2} for a proceding work on singularity theorems in Lorentz-Finsler geometry).
This is not only a generalization of \cite{yoroshikin} and \cite{kuwae_li}, but also a unification of both constant and variable curvature bounds by choosing appropriate $\eps$.
We refer to \cite{kuwae_sakurai,kuwae_sakurai_2,kuwae_sakurai_3} for further investigations on the $\eps$-range.

In this paper, we assume lower bounds of the $m$-Bakry-\'{E}mery-Ricci curvature with $\eps$-range and study analytic applications on non-compact manifolds. The main contributions of this paper are the following:
\begin{itemize}
\item We give an upper bound of the $L^p_\mu$-spectrum. In particular, when $p = 2$, this gives an upper bound of the first nonzero eigenvalue of the weighted Laplacian.
\item We give an explicit form of a local Sobolev inequality. 
\end{itemize}

An upper bound of the first nonzero eigenvalue of the Laplacian under lower Ricci curvature bounds was first investigated in \cite{cheng} in 1975 and it is called the Cheng type inequality.
Some variants of the Cheng type inequality are known (we refer to \cite{linfengwang1}, for example) under lower bounds of the $m$-Bakry-\'{E}mery-Ricci curvature in the case of $m\in[n,\infty]$.
Our Theorem \ref{cheng-type-epsilon-range} generalizes them.
The local Sobolev inequality is an important tool for the DeGiorgi-Nash-Moser theory.
Recently in \cite{linfengwang2}, they obtained a Liouville type theorem for the weighted $p$-Laplacian by using a local Sobolev inequality and Moser's iteration techniques.
Our results in Theorem \ref{sobolev_thm} are consistent with the local Sobolev inequality in \cite{linfengwang2} in the case of constant curvature bounds and the effective dimension $m\in[n,\infty)$.

This paper is organized as follows. In Section 2, we briefly review the Bakry-\'{E}mery-Ricci curvature and Cheng type inequalities and local Sobolev inequalities. We show a Cheng type inequality in Section 3 and a local Sobolev inequality in Section 4 under lower bounds of the Bakry-\'{E}mery-Ricci curvature with $\eps$-range.
In Appendix, we give a variant of Cheng type inequality for deformed metrics under lower bounds of the Bakry-\'{E}mery-Ricci curvature with $\eps$-range.
\section{Preliminaries}
\subsection{$\eps$-range}\label{epsilon-range-section}
Let $(M,g,\mu)$ be an $n$-dimensional weighted Riemannian manifold.
We assume that $M$ is non-compact in this paper.
We set $\mu = \e^{-\psi}v_g$ where $v_g$ is the Riemannian volume measure and $\psi$ is a $C^{\infty}$ function on $M$.
For $m\in(-\infty,1]\cup[n,+\infty]$, the \emph{$m$-Bakry-\'{E}mery-Ricci curvature} is defined as follows:
\begin{align*}
\Ric_\psi^m:=\Ric_g+{\rm \nabla^2}\psi-\frac{\d\psi\otimes \d\psi}{m-n},
\end{align*}
where when $m=+\infty$,
the last term is interpreted as the limit $0$
and when $m=n$,
we only consider a constant function $\psi$, and
set $\Ric_\psi^n:=\Ric_g$.
 
In \cite{lu}, \cite{lu2}, they introduced the notion of \textit{$\eps$-range}:
\begin{equation}\label{epsilin-range}
\eps = 0 \mbox{ for } m = 1, \quad |\eps| < \sqrt{\frac{m-1}{m-n}} \mbox{ for } m\neq 1,n, \quad \eps\in \mathbb{R}\mbox{ for } m = n.
\end{equation}
In this $\eps$-range, for $K\in\mathbb{R}$, they considered the condition
\[ \Ric_\psi^m(v)\ge K \e^{\frac{4(\ez-1)}{n-1}\psi(x)} g(v,v),\quad v\in T_xM.\]
We also define the associated constant $c$ as
\begin{equation}\label{c_num}
c = \frac{1}{n-1}\left(1- \eps^2\frac{m-n}{m-1}\right) > 0
\end{equation}
for $m\neq 1$ and $c = (n-1)^{-1}$ for $m = 1$.
We define the comparison function $\bs_{\kappa}$ as
\begin{equation}\label{eq:bs}
\bs_{\kappa}(t) := \begin{cases}
\frac{1}{\sqrt{\kappa}} \sin(\sqrt{\kappa}t) & \kappa>0, \\
t & \kappa=0, \\
\frac{1}{\sqrt{-\kappa}} \sinh(\sqrt{-\kappa}t) & \kappa<0.
\end{cases}
\end{equation}
We denote $B(x,r)=\{ y \in M \,|\, d(x,y)<r \}$, $V(x,r) = \mu(B(x,r))$ and $tB = B(x,tr)$ if $B = B(x,r)$.
\begin{theorem}(\cite[Theorem 3.11]{lu}, Bishop-Gromov volume comparison theorem)\label{bishop_gromov}
Let $(M,g,\mu)$ be a complete weighted Riemannian manifold and $m \in (-\infty,1] \cup [n,+\infty]$,
$\ez \in \R$ in the $\ez$-range \eqref{epsilin-range}, $K \in \R$ and $b \ge a>0$.
Assume that
\[ \Ric_\psi^m(v)\ge K \e^{\frac{4(\ez-1)}{n-1}\psi(x)} g(v,v)\]
holds for all $v \in T_xM \setminus 0$ and
\[ a \le \e^{-\frac{2(\ez-1)}{n-1}\psi} \le b. \]
Then we have
\[ \frac{\mu(B(x,R))}{\mu(B(x,r))}
\le \frac{b}{a}
\frac{\int_0^{\min\{R/a,\,\pi/\sqrt{cK}\}} \bs_{cK}(\tau)^{1/c} \,\d\tau}{\int_0^{r/b} \bs_{cK}(\tau)^{1/c} \,\d\tau} \]
for all $x\in M$ and $0<r<R$, where $R \le b\pi/\sqrt{cK}$ when $K>0$
and we set $\pi/\sqrt{cK}:=\infty$ for $K \le 0$.
\end{theorem}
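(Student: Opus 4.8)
The plan is to run the classical Bishop--Gromov monotonicity argument in geodesic polar coordinates centered at $x$, but with the radial parameter replaced by a reparametrization that absorbs the weight and converts the variable bound into a constant-curvature comparison. Fix a unit vector $\xi\in T_xM$ and let $\gamma_\xi$ be the unit-speed minimal geodesic it generates; write $v_g = J(t,\xi)\,\d t\,\d\xi$ for the Riemannian volume in polar coordinates up to the cut locus, so that $\mu = \mathcal{J}\,\d t\,\d\xi$ with weighted Jacobian $\mathcal{J}(t,\xi) = \e^{-\psi(\gamma_\xi(t))}J(t,\xi)$. Setting $u := \partial_t\log\mathcal{J}$, which equals the weighted Laplacian $\Delta_\psi r$ of $r=d(x,\cdot)$, and $p := \partial_t(\psi\circ\gamma_\xi)$, the weighted Bochner (Riccati) formula along $\gamma_\xi$ gives
\begin{equation*}
u' + \frac{(u+p)^2}{n-1} + \frac{p^2}{m-n} + \Ric_\psi^m(\dot\gamma_\xi,\dot\gamma_\xi) \le 0,
\end{equation*}
into which I insert the hypothesis $\Ric_\psi^m(\dot\gamma_\xi,\dot\gamma_\xi) \ge K\e^{\frac{4(\eps-1)}{n-1}\psi} = K\phi^{-2}$, where $\phi := \e^{-\frac{2(\eps-1)}{n-1}\psi}$ satisfies $a \le \phi \le b$.

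The heart of the proof is to linearize the weight. I introduce the reparametrized radius $s$ by $\d s = \phi^{-1}\,\d t$ and consider $W := \tfrac{\d}{\d s}\log\mathcal{J} = \phi\,u$. Multiplying the displayed inequality by $\phi^2$ and comparing with $\tfrac{\d W}{\d s}+cW^2$, a direct computation shows that the desired model inequality $\tfrac{\d W}{\d s} + cW^2 + K \le 0$ is equivalent to the pointwise estimate
\begin{equation*}
0 \le \Big(\tfrac{1}{n-1}-c\Big)u^2 + \tfrac{2\eps}{n-1}\,u\,p + \Big(\tfrac{1}{n-1}+\tfrac{1}{m-n}\Big)p^2 .
\end{equation*}
The definition \eqref{c_num} of $c$ is calibrated precisely so that this quadratic form in $(u,p)$ is a nonnegative perfect square: its diagonal coefficients are nonnegative exactly because $m\in(-\infty,1]\cup[n,\infty]$, and its discriminant vanishes identically, $\big(\tfrac{2\eps}{n-1}\big)^2 = 4\big(\tfrac{1}{n-1}-c\big)\big(\tfrac{1}{n-1}+\tfrac{1}{m-n}\big)$. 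The $\eps$-range \eqref{epsilin-range} then guarantees $c>0$, so that $\tfrac1c\,\bs_{cK}'/\bs_{cK}$ is a genuine solution of the model Riccati equation $W'+cW^2+K=0$ attached to the comparison density $\bs_{cK}^{1/c}$.

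With $\tfrac{\d W}{\d s}+cW^2+K\le 0$ in hand, together with the boundary behaviour $\mathcal{J}\sim(\mathrm{const})\,t^{n-1}$ as $t\to 0^+$ (so $W\sim (n-1)/s$, and $n-1\le 1/c$ since \eqref{c_num} gives $c\le(n-1)^{-1}$), the Sturm/Riccati comparison yields $W(s)\le \tfrac1c\,\bs_{cK}'(s)/\bs_{cK}(s)$ up to the cut value. Equivalently, for each $\xi$ the ratio $s\mapsto \mathcal{J}(t(s),\xi)\,\bs_{cK}(s)^{-1/c}$ is non-increasing on its domain, extended by $0$ past the cut locus, the density being supported in $s\le \pi/\sqrt{cK}$ when $K>0$ because $\bs_{cK}$ vanishes there.

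It remains to integrate. Undoing the substitution on a radial integral gives $\int_0^{\rho}\mathcal{J}\,\d t = \int_0^{s(\rho)}\mathcal{J}(t(s),\xi)\,\phi\,\d s$; bounding $\phi\le b$ with $s(R)\le R/a$, respectively $\phi\ge a$ with $s(r)\ge r/b$ (both from $a\le\phi\le b$), yields $V(x,R)\le b\int_{S_xM}\!\int_0^{R/a}\mathcal{J}(t(s),\xi)\,\d s\,\d\xi$ and $V(x,r)\ge a\int_{S_xM}\!\int_0^{r/b}\mathcal{J}(t(s),\xi)\,\d s\,\d\xi$. Applying the elementary lemma that $\int_0^S\! f\big/\int_0^S\! g$ is non-increasing in $S$ whenever $f/g$ is non-increasing — with $f=\mathcal{J}(t(s),\xi)$ and $g=\bs_{cK}(s)^{1/c}$ — to the inner integrals and then integrating over $S_xM$ produces the factor $\int_0^{R/a}\bs_{cK}^{1/c}\big/\int_0^{r/b}\bs_{cK}^{1/c}$, while the two prefactors contribute the ratio $b/a$; this is exactly the asserted bound. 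I expect the main obstacle to be the second paragraph: correctly identifying the reparametrization exponent $\tfrac{2(\eps-1)}{n-1}$ and verifying that the residual quadratic form is a nonnegative perfect square, since this is where the precise value of $c$, the admissible range of $m$, and the $\eps$-range all enter. The remaining steps are the standard Bishop--Gromov machinery, the only extra care being the cut-locus truncation and the $a$--$b$ bookkeeping that produces the $b/a$ factor.
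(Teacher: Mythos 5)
Your proposal is correct and follows essentially the same route as the paper's sketch of the argument from \cite[Theorems 3.6, 3.11]{lu}: your reparametrized radius $s$ is exactly the paper's $\varphi_\eta$, your monotonicity of $s\mapsto \mathcal{J}(t(s),\xi)\,\bs_{cK}(s)^{-1/c}$ is precisely the key property \eqref{non_increasing_property}, and your Riccati/perfect-square computation (equivalently, the Sturm inequality $h_1''+cKh_1\le 0$ for $h_1=\mathcal{J}^c$) is exactly how the curvature bound with $\eps$-range is used there, with the definition \eqref{c_num} of $c$ making the discriminant vanish as you verified. The concluding integration, cut-locus truncation, and $a$--$b$ bookkeeping also match the cited proof, so no gaps remain.
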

We briefly review the argument in \cite{lu} (where they considered, more generally, Finsler manifolds equipped with measures).
Given a unit tangent vector $v \in T_xM $,
let $\eta:[0,l) \lra \R$ be the geodesic with $\dot{\eta}(0)=v$.
We take an orthonormal basis $\{e_i\}_{i=1}^n$ of $T_xM$ with $e_n=v$
and consider the Jacobi fields
\[ E_i(t):=(\td\exp_x)_{tv}(te_i), \quad i=1,2,\ldots,n-1, \]
along $\eta$.
Define the $(n-1) \times (n-1)$ matrices $A(t)=(a_{ij}(t))$ by
\[ a_{ij}(t):=g \big( E_i(t),E_j(t) \big). \]
We define
\[ h_0(t):=(\det A(t))^{1/2(n-1)}, \qquad
h(t) := \e^{-c\psi(\eta(t))}\big(\! \det A(t) \big)^{c/2}, \qquad
h_1(\tau):=h \big( \varphi_\eta^{-1}(\tau) \big) \]
for $t \in [0,l)$ and $\tau \in [0,\varphi_{\eta}(l))$, where
\begin{equation*}
\varphi_{\eta}(t) :=\int_0^t \e^{\frac{2(\ez -1)}{n-1}\psi(\eta(s))} \,\td s.
\end{equation*}
By the definition, we have the following relationship:
\[ (\e^{-\psi(\eta)} h_0^{n-1})(t) =h(t)^{1/c} =h_1\big( \varphi_{\eta}(t) \big)^{1/c}. \]
According to the argument in \cite[Theorem 3.6]{lu}, the condition $\Ric_\psi^m(v)\ge K \e^{\frac{4(\ez-1)}{n-1}\psi(x)} g(v,v)$ implies that
\begin{equation}\label{non_increasing_property}
(\e^{-\psi(\eta)} h_0^{n-1})/\bs_{cK}(\varphi_{\eta})^{1/c} \mbox{ is non-increasing.}
\end{equation}
This plays the key role in proving Theorem \ref{bishop_gromov} above.

\subsection{Upper bounds of the $L^p_\mu$-spectrum}
In this subsection, we explain Cheng type inequalities under lower Bakry-\'{E}mery-Ricci curvature bounds by constants. We generalize these results to the $\eps$-range in Section 3.
For $p > 1$, the \emph{$L^p_\mu$-spectrum} is defined by
\begin{equation*}
\lambda_{\mu,p}(M) := \inf_{\phi\in C_0^{\infty}(M)}\frac{\int_M |\nabla \phi|^p \d\mu}{\int_M|\phi|^p \d\mu}.
\end{equation*}
When $p = 2$, the $L^p_\mu$ spectrum is the first nonzero eigenvalue of the weighted Laplacian.
Under lower $m$-Bakry-\'{E}mery-Ricci curvature bounds with $m \in [n,\infty)$, we have the following theorems.
\begin{theorem}(\cite[Theorem 3.2]{linfengwang1})\label{cheng-type-m-inequality}
Let $(M, g, \mu)$ be an $n$-dimensional weighted complete Riemannian manifold.
Assume that $\Ric_\psi^m \geq- K\ (K \geq 0)$. Then the $L^p_\mu$-spectrum satisfies
$$
\lambda_{\mu,p}(M) \leq \left(\frac{\sqrt{(m-1)K}}{p}\right)^p.
$$
\end{theorem}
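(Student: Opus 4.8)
The plan is to bound the Rayleigh quotient from above by testing against compactly supported radial functions that decay exponentially, and to control all the resulting integrals by the exponential volume growth that the curvature hypothesis forces. Throughout I fix a base point $o \in M$ and write $r(x) := d(o,x)$.

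First I would extract the volume growth. Since $m \in [n,\infty)$, the constant bound $\Ric_\psi^m \ge -K$ is precisely the case $\eps = 1$ of the hypothesis of Theorem \ref{bishop_gromov} with $K$ replaced by $-K$: the weight $\e^{\frac{2(\eps-1)}{n-1}\psi}$ is then identically $1$, so one may take $a = b = 1$, and the associated constant is $c = (m-1)^{-1}$. Theorem \ref{bishop_gromov} thus yields
\[
\frac{V(o,R)}{V(o,r)} \le \frac{\int_0^R \bs_{-K/(m-1)}(\tau)^{m-1}\, \d\tau}{\int_0^r \bs_{-K/(m-1)}(\tau)^{m-1}\, \d\tau}.
\]
Because $\bs_{-K/(m-1)}(\tau)^{m-1}$ grows no faster than $\e^{\sqrt{(m-1)K}\,\tau}$, this gives $V(o,s) \le C\,\e^{\sqrt{(m-1)K}\,s}$ for all large $s$, and hence $\int_M \e^{-p\alpha r}\, \d\mu < \infty$ whenever $p\alpha > \sqrt{(m-1)K}$.

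Next I would build test functions from the profile $f_{\alpha,R}$ that equals $\e^{-\alpha s}$ on $[0,R]$, decreases linearly from $\e^{-\alpha R}$ to $0$ on $[R,R+1]$, and vanishes beyond $R+1$. Setting $\phi_R := f_{\alpha,R}(r)$ (which is Lipschitz and compactly supported, hence admissible in the infimum defining $\lambda_{\mu,p}$ after a routine smoothing) and using $|\nabla r| = 1$ almost everywhere, one has $|\nabla \phi_R| = \alpha \e^{-\alpha r}$ on $B(o,R)$ and $|\nabla \phi_R| = \e^{-\alpha R}$ on the annulus $B(o,R+1)\setminus B(o,R)$. Therefore
\[
\int_M |\phi_R|^p\, \d\mu \ge \int_{B(o,R)} \e^{-p\alpha r}\, \d\mu, \qquad
\int_M |\nabla \phi_R|^p\, \d\mu = \alpha^p\!\int_{B(o,R)} \e^{-p\alpha r}\, \d\mu + \e^{-p\alpha R}\,\mu\big(B(o,R+1)\setminus B(o,R)\big).
\]

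The crux of the argument — and the step I expect to be the main obstacle — is to show that the annular term is negligible, so that the numerator converges to $\alpha^p I$ and the denominator to $I := \int_M \e^{-p\alpha r}\, \d\mu \in (0,\infty)$, whence the quotient tends to $\alpha^p$. This is where the strict inequality $p\alpha > \sqrt{(m-1)K}$ together with the bounded width of the cutoff is essential: the annular term is at most $\e^{-p\alpha R}\,V(o,R+1)$, and since $V(o,R+1)$ grows no faster than $\e^{\sqrt{(m-1)K}(R+1)}$ while $p\alpha$ strictly exceeds $\sqrt{(m-1)K}$, it tends to $0$; a cutoff whose width grew with $R$ would instead inflate the volume factor and ruin this estimate. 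Consequently $\lambda_{\mu,p}(M) \le \alpha^p$ for every $\alpha > \sqrt{(m-1)K}/p$, and letting $\alpha \downarrow \sqrt{(m-1)K}/p$ gives the claimed bound. The mild non-smoothness of $r$ on the cut locus is harmless since $r$ is Lipschitz with unit gradient almost everywhere; alternatively one may invoke the density of $C_0^\infty(M)$ in $W^{1,p}(M,\mu)$ on complete manifolds and test directly with $\e^{-\alpha r}$.
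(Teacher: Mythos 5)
Your proof is correct and follows essentially the same route as the paper, which derives this statement as the special case $\eps=1$, $a=b=1$, $c=1/(m-1)$ of Theorem \ref{cheng-type-epsilon-range}: there too one tests $\lambda_{\mu,p}$ with an exponentially decaying radial function whose rate is just above $\sqrt{(m-1)K}/p$, cut off over a unit-width annulus at radius $R$, and uses the Bishop--Gromov comparison (Theorem \ref{bishop_gromov}) to make the annulus contribution vanish as $R\to\infty$ before optimizing the rate. Your deviations are cosmetic --- truncating the profile rather than multiplying by a cutoff function, computing the bulk term exactly instead of using the paper's Young-type splitting with the auxiliary parameter $\zeta$, and the caveat that for $K=0$ your intermediate bound $V(o,s)\le C\e^{\sqrt{(m-1)K}\,s}$ should be replaced by the polynomial bound coming from $\bs_0(\tau)=\tau$, which your argument accommodates equally well.
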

An additional assumption on the weight function leads to the following Cheng type inequality under a lower $\infty$-Bakry-\'{E}mery-Ricci curvature bound.
\begin{theorem}(\cite[Theorem 3.3]{linfengwang1})
Let $(M, g, \mu)$ be an $n$-dimensional complete weighted Riemannian manifold.
We fix a point $q\in M$.
Assume that
$\Ric_\psi^{\infty} \geq- K \ (K \geq 0)$
and
$ \frac{\partial \psi}{\partial r} \geq-k\ (k \geq 0)$
along all minimal geodesic segments from the fixed point $q\in M$, where $r$ is the distance from $q$. Then the $L^p_\mu$-spectrum satisfies
$$
\lambda_{\mu,p}(M) \leq \left(\frac{ \sqrt{(n-1)K}+k}{p}\right)^p.
$$
\end{theorem}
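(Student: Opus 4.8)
The plan is to combine a weighted volume-growth estimate with a Brooks-type test-function argument. Throughout write $r=r(x)=d(q,x)$ and set $\alpha:=\sqrt{(n-1)K}+k$; the goal is to show $\lambda_{\mu,p}(M)\le(\alpha/p)^p$. First I would control the exponential growth of the weighted area element of geodesic spheres centred at $q$, and then exhibit, for every $\beta>\alpha/p$, a family of compactly supported functions whose Rayleigh quotients converge to $\beta^p$.

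Step 1 (weighted Laplacian and volume comparison). Along a unit-speed minimal geodesic $\eta$ from $q$, the Bochner formula applied to $r$ together with Cauchy--Schwarz gives the Riccati inequality $\partial_r(\Delta r)\le-\frac{(\Delta r)^2}{n-1}-\Ric(\partial_r,\partial_r)$. Writing $\Delta_\psi r=\Delta r-\partial_r\psi$ and using $\Ric(\partial_r,\partial_r)=\Ric_\psi^\infty(\partial_r,\partial_r)-\partial_r^2\psi\ge-K-\partial_r^2\psi$, this becomes
\[ \partial_r(\Delta_\psi r)\le-\frac{(\Delta r)^2}{n-1}+K. \]
The difficulty is that $\Delta r=\Delta_\psi r+\partial_r\psi$, so the right-hand side couples $\Delta_\psi r$ with the a priori uncontrolled radial derivative of $\psi$. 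This is exactly where the hypothesis $\partial_r\psi\ge-k$ enters: since $\partial_r\psi\ge-k$, one bounds $(\Delta r)^2=(\Delta_\psi r+\partial_r\psi)^2$ from below by $(\Delta_\psi r-k)^2$ once $\Delta_\psi r\ge k$, and a scalar Riccati comparison with the model equation solved by $m_H(r)+k$ — where $m_H(r)=\sqrt{(n-1)K}\coth\!\big(\sqrt{K/(n-1)}\,r\big)$ is the mean curvature of the geodesic sphere in the space form of constant Ricci curvature $-K$ — closes the argument. Matching the singular initial behaviour $\Delta_\psi r\sim(n-1)/r$ as $r\to0$ yields
\[ \Delta_\psi r\le\sqrt{(n-1)K}\,\coth\!\Big(\sqrt{K/(n-1)}\,r\Big)+k. \]
Since $\frac{d}{dr}\log\mathcal A_\psi(r,\theta)=\Delta_\psi r$ for the weighted area element $\mathcal A_\psi$ of geodesic spheres, integrating this bound gives $\mathcal A_\psi(r)\le C\e^{\alpha r}$ and hence $V(q,r)\le C'\e^{\alpha r}$; beyond the cut locus the estimate is used in the usual one-sided (barrier) sense, which suffices for the upper volume bound.

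Step 2 (Brooks-type argument). Fix $\beta>\alpha/p$ and, for $R>0$, set $\phi_R:=(\e^{-\beta r}-\e^{-\beta R})_+$, a Lipschitz function supported in $B(q,R)$ that may be approximated in the relevant Sobolev norm by elements of $C_0^\infty(M)$. Since $|\nabla r|=1$ away from $q$ and the cut locus, $|\nabla\phi_R|=\beta\e^{-\beta r}$ on $B(q,R)$, and integrating in geodesic polar coordinates gives
\[ \frac{\int_M|\nabla\phi_R|^p\,\d\mu}{\int_M|\phi_R|^p\,\d\mu}=\frac{\beta^p\int_0^R\e^{-p\beta r}\mathcal A_\psi(r)\,\d r}{\int_0^R(\e^{-\beta r}-\e^{-\beta R})^p\mathcal A_\psi(r)\,\d r}. \]
Because $p\beta>\alpha$, the growth bound $\mathcal A_\psi(r)\le C\e^{\alpha r}$ makes $\e^{-p\beta r}\mathcal A_\psi(r)$ integrable on $[0,\infty)$; letting $R\to\infty$, monotone convergence in the numerator and dominated convergence in the denominator show that this ratio tends to $\beta^p$. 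Hence $\lambda_{\mu,p}(M)\le\beta^p$ for every $\beta>\alpha/p$, and letting $\beta\downarrow\alpha/p$ gives $\lambda_{\mu,p}(M)\le(\alpha/p)^p$, as claimed.

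The main obstacle is Step 1, specifically the coupling of $\Delta_\psi r$ with $\partial_r\psi$ in the Riccati inequality: without extra information the weighted mean curvature need not obey a closed comparison inequality, and it is precisely the radial bound $\partial_r\psi\ge-k$ that closes the argument and produces the additive constant $k$ in the growth rate $\alpha$. The remaining points — justifying the polar-coordinate integration up to the cut locus and approximating $\phi_R$ by smooth compactly supported functions — are routine.
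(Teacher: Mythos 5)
Your proposal is correct, but it takes a genuinely different route from the paper. The paper never proves this statement itself: it is quoted from \cite[Theorem 3.3]{linfengwang1}, and the paper's own Cheng-type argument (the proof of Theorem \ref{cheng-type-epsilon-range}, which follows Wang's method) runs differently --- there one plugs $\phi=\e^{\alpha r}\varphi$, with $\alpha<0$ and $\varphi$ a cutoff satisfying $|\nabla\varphi|\le C_3$, into the variational characterization of $\lambda_{\mu,p}(M)$, and kills the error term $\e^{p\alpha(R-1)}\mu(B(x,R))$ by citing a Bishop--Gromov volume comparison as a black box (Theorem \ref{bishop_gromov} in the $\eps$-range case; the Wei--Wylie comparison would play that role here). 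You instead (i) derive the required exponential volume growth from scratch, using the Riccati inequality and the radial bound $\partial_r\psi\ge-k$ to get $\Delta_\psi r\le m_H(r)+k$ with $m_H(r)=\sqrt{(n-1)K}\coth\bigl(\sqrt{K/(n-1)}\,r\bigr)$ (this is exactly Wei--Wylie's mean curvature comparison), and (ii) replace the cutoff bookkeeping by Brooks-type test functions $(\e^{-\beta r}-\e^{-\beta R})_+$, whose Rayleigh quotients converge to $\beta^p$ by monotone/dominated convergence. What each approach buys: yours is self-contained precisely where the paper outsources the geometry, and avoids the two-parameter $(\zeta,\delta)$ limiting argument; the paper's cutoff method, on the other hand, transplants verbatim to any setting where a Bishop--Gromov theorem is available, which is how the paper obtains its $\eps$-range generalization. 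Two phrasings in your Step 1 deserve tightening, though neither is a real gap: $m_H+k$ does not solve the model Riccati equation, so the clean statement is that $u:=\Delta_\psi r-k$ satisfies $u'\le -u^2/(n-1)+K$ wherever $u\ge0$ and is dominated by $m_H$ as $r\to0$, whence $u\le m_H$ by a first-crossing argument; and for $K=0$ the bound $\mathcal{A}_\psi(r)\le C\e^{\alpha r}$ acquires a polynomial factor $r^{n-1}$ from the model, which is harmless since you only need integrability of $\e^{-p\beta r}\mathcal{A}_\psi(r)$ when $p\beta>\alpha$.
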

These results are generalizations of the original Cheng type inequality in \cite{cheng}.
\subsection{Local Sobolev inequality}
We have the following local Sobolev inequality under lower bounds of the $m$-Bakry-\'{E}mery-Ricci curvature in the case of $m\in (n,\infty)$ and $n \geq 2$. 
We generalize the following result in Section 4. We refer to \cite{soliton} for the case of $m = \infty$.
\begin{theorem}(\cite[Lemma 3.2]{linfengwang2})
Let $\left(M, g, \mu\right)$ be an $n$-dimensional weighted complete Riemannian manifold. If $\operatorname{Ric}_\psi^m \geq-(m-1) K$ for some $K\geq 0$ and $m>n \geq 2$, then there exists a constant $C$, depending on $m$, such that for all $B(o,r) \subset M$ we have for $f \in C_0^{\infty}\left(B(o,r)\right)$,
$$
\left(\int_{B(o,r)}|f|^{\frac{2 m}{m-2}} \mathrm{~d} \mu\right)^{\frac{m-2}{m}} \leq \e^{C(1+\sqrt{K} r)}\mu(B(o, r))^{-\frac{2}{m}} r^2 \int_{B(o,r)}\left(|\nabla f|^2+r^{-2} f^2\right) \mathrm{d} \mu.
$$
\end{theorem}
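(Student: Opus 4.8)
The plan is to obtain the Sobolev inequality as a consequence of two geometric ingredients---volume doubling and a scale-invariant Neumann--Poincar\'{e} inequality on balls---combined through the abstract machinery of Saloff-Coste relating doubling, Poincar\'{e} and Sobolev inequalities. In this scheme the exponent $m$ in $\frac{2m}{m-2}$ appears as a \emph{Sobolev dimension}, which may be taken to be any number exceeding $n$; the curvature bound enters only through the non-scale-invariant prefactor $\e^{C(1+\sqrt{K}r)}$, while the normalization $V(o,r)^{-2/m}r^{2}$ is dictated by scaling.

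First I would establish volume doubling directly from the Bishop--Gromov comparison Theorem \ref{bishop_gromov}. The hypothesis $\Ric_\psi^m\ge -(m-1)K\,g$ is exactly the $\eps$-range condition with $\eps=1$: since $m>n\ge 2$ one has $|\eps|=1<\sqrt{(m-1)/(m-n)}$, so $\eps=1$ lies in \eqref{epsilin-range}, and \eqref{c_num} gives $c=(m-1)^{-1}$, hence $1/c=m-1$. Taking the curvature constant in Theorem \ref{bishop_gromov} to be $-(m-1)K$ yields the effective value $cK=-K\le 0$, and because $\e^{-\frac{2(\eps-1)}{n-1}\psi}\equiv 1$ for $\eps=1$ one may set $a=b=1$. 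Theorem \ref{bishop_gromov} then gives, for every center $y\in M$ and radius $s\le r$,
\[
\frac{V(y,2s)}{V(y,s)}\le \frac{\int_0^{2s}\bs_{-K}(\tau)^{m-1}\,\d\tau}{\int_0^{s}\bs_{-K}(\tau)^{m-1}\,\d\tau}.
\]
A routine estimate of this $\sinh$-integral ratio produces the doubling bound $V(y,2s)\le \e^{C(1+\sqrt{K}r)}V(y,s)$ with $C=C(m)$.

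Next I would prove the $L^2$ Neumann--Poincar\'{e} inequality
\[
\inf_{a\in\R}\int_{B(y,s)}|f-a|^{2}\,\d\mu\le \e^{C(1+\sqrt{K}r)}\,s^{2}\int_{B(y,2s)}|\nabla f|^{2}\,\d\mu .
\]
The natural route is a weighted segment inequality of Cheeger--Colding type: one integrates $|\nabla f|$ along minimal geodesics joining pairs of points and changes variables, the arising Jacobian factor being controlled by the monotone quantity in \eqref{non_increasing_property}. This monotonicity is precisely the point at which the curvature bound is used; averaging over the two endpoints then yields the Poincar\'{e} inequality with the stated constant.

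Finally, feeding volume doubling and the Poincar\'{e} inequality into Saloff-Coste's theorem---doubling together with a weak Poincar\'{e} inequality on a family of concentric and nearby balls implies the local Sobolev inequality---gives the assertion with $\nu=m$ and normalization $V(o,r)^{-2/m}r^{2}$. I expect the main obstacle to be twofold. First, establishing the weighted segment/Poincar\'{e} inequality with a genuinely scale-invariant constant requires care, whereas the doubling step is essentially immediate from Theorem \ref{bishop_gromov}. Second, one must track constants through the Moser-type iteration underlying Saloff-Coste's argument so as to land exactly on the exponent $\frac{2m}{m-2}$ and to absorb all curvature dependence into a single factor $\e^{C(1+\sqrt{K}r)}$ with $C=C(m)$.
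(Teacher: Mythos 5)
Your proposal is correct and takes essentially the same route as the paper: the paper does not reprove this cited result of Wang et al., but its own $\eps$-range generalization (the doubling estimate \eqref{volume-doubling-property} from Theorem \ref{bishop_gromov}, the segment/Jacobian Poincar\'{e} inequality of Theorem \ref{poincare_thm} based on \eqref{non_increasing_property}, and the Saloff-Coste machinery of Theorem \ref{proof_ingredients_analysis} via Lemmas \ref{lemma_1} and \ref{lemma_2}) is precisely the argument you outline. Your specialization $\eps=1$, $a=b=1$, $c=1/(m-1)$, hence $\nu=1+\frac{1}{c}=m>2$ and Sobolev exponent $\frac{2(1+c)}{1-c}=\frac{2m}{m-2}$, is exactly how the paper's Theorem \ref{sobolev_thm} recovers the stated inequality.
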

We will use the next theorem in Subsection 4.2 to prove a local Sobolev inequality under lower Bakry-\'{E}mery-Ricci curvature bounds with $\eps$-range.
 
\begin{theorem}(\cite[Theorem 2.2]{salof1})\label{proof_ingredients_analysis}
Let $\e^{-t A}$ be a symmetric submarkovian semigroup acting on the spaces $L^p(M, \d\mu)$. Given $\nu>2$, the following three properties are equivalent.
\begin{itemize}
\item[\mbox{1}.] $\left\|\e^{-t A} f\right\|_{\infty} \leq C_0 t^{-\nu / 2}\|f\|_1$ for $0<t<t_0$.
\item[\mbox{2}.] $\|f\|_{2 \nu /(\nu-2)}^2 \leq C_1\left(\left\|A^{1 / 2} f\right\|_2^2+t_0^{-1}\|f\|_2^2\right)$.
\item[\mbox{3}.] $\|f\|_2^{2+4 / \nu} \leq C_2\left(\left\|A^{1 / 2} f\right\|_2^2+t_0^{-1}\|f\|_2^2\right)\|f\|_1^{4 / \nu}$.
\end{itemize}
Moreover, 3. implies 1. with $C_0=\left(\nu C C_2\right)^{\nu / 2}$ and 1. implies 2. with $C_1=C C_0^{2 / \nu}$, where $C$ is some numerical constant.
\end{theorem}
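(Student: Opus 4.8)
The plan is to prove the circle of implications $2 \Rightarrow 3 \Rightarrow 1 \Rightarrow 2$; the first two are comparatively routine, while the third carries the analytic weight and is where I expect the difficulty to lie.

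For $2 \Rightarrow 3$ I would interpolate the $L^{2\nu/(\nu-2)}$-norm between $L^1$ and $L^2$ by H\"older's inequality. Writing $\|f\|_2 \le \|f\|_1^{\theta}\,\|f\|_{2\nu/(\nu-2)}^{1-\theta}$ with $\theta = 2/(\nu+2)$ (the exponent forced by $\tfrac12 = \theta + (1-\theta)\tfrac{\nu-2}{2\nu}$) and substituting the Sobolev bound of property~2 for $\|f\|_{2\nu/(\nu-2)}^2$, one raises to the power $1/(1-\theta) = (\nu+2)/\nu$ and recovers the Nash inequality of property~3, with $C_2$ a fixed multiple of $C_1$.

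For $3 \Rightarrow 1$ I would run the Nash--Moser energy argument. Normalizing $\|f\|_1 = 1$ and using that the semigroup is submarkovian, so $\|\e^{-tA}f\|_1 \le 1$, set $u(t) := \|\e^{-tA}f\|_2^2$. Then $u'(t) = -2\|A^{1/2}\e^{-tA}f\|_2^2$, and inserting $\e^{-tA}f$ into property~3 yields the differential inequality
\[ u(t)^{1+2/\nu} \le C_2\Big(-\tfrac12 u'(t) + t_0^{-1}u(t)\Big). \]
Passing to $w := u^{-2/\nu}$ turns this into a linear inequality $w' + \tfrac{4}{\nu t_0}w \ge \tfrac{4}{\nu C_2}$, which integrates (via the integrating factor $\e^{4t/(\nu t_0)}$) to $w(t) \gtrsim t/C_2$ for $0 < t < t_0$, that is $u(t) \le C\,t^{-\nu/2}$ and hence $\|\e^{-tA}\|_{1\to 2}\le C\,t^{-\nu/4}$. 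Self-adjointness gives the dual bound $\|\e^{-tA}\|_{2\to\infty}\le C\,t^{-\nu/4}$, and the semigroup identity $\e^{-tA} = \e^{-tA/2}\e^{-tA/2}$ delivers the ultracontractive estimate of property~1 with $C_0 = (\nu C C_2)^{\nu/2}$ once the constants are tracked.

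The main obstacle is $1 \Rightarrow 2$, where an off-diagonal decay of the semigroup must be converted into a genuine Sobolev embedding. Here I would represent the negative power of the shifted generator by subordination,
\[ (A + t_0^{-1})^{-1/2} = \frac{1}{\Gamma(1/2)}\int_0^\infty s^{-1/2}\,\e^{-s/t_0}\,\e^{-sA}\,\d s, \]
and estimate its norm from $L^2$ to $L^{2\nu/(\nu-2)}$. Interpolating the contraction bound $\|\e^{-sA}\|_{2\to2}\le 1$ against $\|\e^{-sA}\|_{2\to\infty}\le \sqrt{C_0}\,s^{-\nu/4}$ (obtained from property~1 by duality) gives $\|\e^{-sA}\|_{2\to 2\nu/(\nu-2)}\le C_0^{1/\nu}s^{-1/2}$, so the integrand behaves like $s^{-1}$ near $s=0$ and the naive Minkowski estimate is only borderline divergent. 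The honest fix is to split the integral at a threshold chosen according to the level of the distribution function, producing a weak-type $(2,\,2\nu/(\nu-2))$ bound, and then upgrade to the strong Sobolev inequality by Marcinkiewicz interpolation; the hypothesis $\nu > 2$ is exactly what makes this mechanism work and reproduces $C_1 = C C_0^{2/\nu}$. The delicate points are the bookkeeping of interpolation exponents so that the powers of $s$ combine correctly, and the justification of the functional calculus for the submarkovian generator $A$.
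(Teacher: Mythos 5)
First, a point of reference: the paper itself does not prove Theorem \ref{proof_ingredients_analysis} at all --- it is quoted as a black box from \cite[Theorem 2.2]{salof1} and used in Subsection \ref{Sobolev_section}. So your proposal can only be measured against the classical proof behind that citation, which it in fact follows closely: $2\Rightarrow 3$ by H\"{o}lder interpolation of the $L^2$-norm between $L^1$ and $L^{2\nu/(\nu-2)}$, $3\Rightarrow 1$ by Nash's differential-inequality iteration, and $1\Rightarrow 2$ by Varopoulos-type subordination. Your exponent bookkeeping in the first two implications is correct: $\theta=2/(\nu+2)$ does turn property 2 into property 3 with $C_2$ proportional to $C_1$; the inequality $u^{1+2/\nu}\leq C_2(-\tfrac{1}{2}u'+t_0^{-1}u)$, the substitution $w=u^{-2/\nu}$, and the $TT^*$/duality/semigroup splitting do produce $C_0=(\nu C C_2)^{\nu/2}$. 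Your diagnosis of $1\Rightarrow 2$ is also the right one: the subordination integrand is borderline ($s^{-1}$ near $s=0$), so Minkowski fails and a weak-type argument is forced.

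The one step that, as written, does not work is the last sentence of your $1\Rightarrow 2$ argument: ``producing a weak-type $(2,2\nu/(\nu-2))$ bound, and then upgrade to the strong Sobolev inequality by Marcinkiewicz interpolation.'' Marcinkiewicz interpolation cannot upgrade a single weak-type $(p,q)$ estimate to a strong-type estimate at the \emph{same} pair $(p,q)$; it yields strong bounds only strictly between two weak-type bounds at \emph{different} exponent pairs. (Interpolating your weak $(2,q)$ bound against the trivial $(2,2)$ bound does not help either: both pairs have first exponent $2$, so the interpolated second exponent stays strictly below $q=2\nu/(\nu-2)$ and the endpoint is never reached.) The standard repair --- and what the Varopoulos argument actually provides --- is to run your threshold-splitting of the subordination integral on $L^p$ for $p$ near $2$, not just on $L^2$: interpolating $\|\e^{-sA}\|_{\infty\to\infty}\leq 1$ against property 1 gives $\|\e^{-sA}\|_{p\to\infty}\leq (C_0 s^{-\nu/2})^{1/p}$, and the same splitting then yields weak-type $(p,q_p)$ bounds for $(A+t_0^{-1})^{-1/2}$, with $1/q_p=1/p-1/\nu$ and constant proportional to $C_0^{1/\nu}$, for every $1<p<\nu$ (here $\nu>2$ guarantees both the convergence of the tail integral and the existence of admissible $p$ on both sides of $2$). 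Choosing $p_0<2<p_1<\nu$ and applying Marcinkiewicz between the two corresponding pairs gives the strong $(2,2\nu/(\nu-2))$ bound $\|(A+t_0^{-1})^{-1/2}\|_{2\to 2\nu/(\nu-2)}\leq C C_0^{1/\nu}$, which by the spectral theorem is exactly property 2 with $C_1=C^2C_0^{2/\nu}$. With this correction your outline is a complete proof, and it coincides with the argument behind the cited theorem.
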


\section{Upper bound of the $L^p_\mu$-spectrum with $\eps$-range}
\begin{theorem}\label{cheng-type-epsilon-range}
Let $(M,g,\mu)$ be an $n$-dimensional weighted complete Riemannian manifold and $m \in (-\infty,1] \cup [n,+\infty]$,
$\ez \in \R$ in the $\ez$-range \eqref{epsilin-range}, $K > 0$ and $b \ge a>0$.
Assume that
\[ \Ric_\psi^m(v)\ge -K \e^{\frac{4(\ez-1)}{n-1}\psi(x)} g(v,v) \]
holds for all $v \in T_xM \setminus 0$ and
\begin{equation}\label{eq:wab}
a \le \e^{-\frac{2(\ez-1)}{n-1}\psi} \le b.
\end{equation}
Then, for $p > 1$, we have
\begin{equation*}
\lambda_{\mu,p}(M) \leq \left(\sqrt{\frac{K}{c}}\frac{1}{pa}\right)^p.
\end{equation*}
\end{theorem}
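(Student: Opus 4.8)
The plan is to reduce the estimate to the exponential volume growth rate of $\mu$ and to control that rate by the Bishop--Gromov comparison Theorem~\ref{bishop_gromov}. Concretely, I would first isolate a Brooks-type principle: for a fixed base point $x$, if
\[ \mu_\infty := \limsup_{R\to\infty}\frac{\log V(x,R)}{R} \]
is finite, then $\lambda_{\mu,p}(M) \le (\mu_\infty/p)^p$. Granting this, the theorem follows once I show $\mu_\infty \le a^{-1}\sqrt{K/c}$, and the two pieces are essentially independent.

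For the volume-growth bound I would apply Theorem~\ref{bishop_gromov} with its curvature constant taken to be $-K$, which is exactly our hypothesis $\Ric_\psi^m(v)\ge -K\e^{\frac{4(\ez-1)}{n-1}\psi(x)}g(v,v)$. Since $-K\le 0$, the relevant comparison function is the hyperbolic one, $\bs_{-cK}(\tau)=(cK)^{-1/2}\sinh(\sqrt{cK}\,\tau)$, and the cutoff $\pi/\sqrt{\cdot}$ is set to $\infty$, so there is no upper constraint on $R$. Fixing $r>0$ and letting $R\to\infty$,
\[ V(x,R) \le \frac{b}{a}\,\frac{\int_0^{R/a}\bs_{-cK}(\tau)^{1/c}\,\d\tau}{\int_0^{r/b}\bs_{-cK}(\tau)^{1/c}\,\d\tau}\,V(x,r). \]
Because $\sinh(\sqrt{cK}\,\tau)\sim\tfrac12\e^{\sqrt{cK}\,\tau}$, the integrand satisfies $\bs_{-cK}(\tau)^{1/c}=\Theta(\e^{(\sqrt{cK}/c)\tau})=\Theta(\e^{\sqrt{K/c}\,\tau})$, so the numerator integral is of order $\e^{\sqrt{K/c}\,R/a}$. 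All the $R$-independent constants (the prefactor $b/a$, the denominator, and $V(x,r)$) drop out in the logarithmic limit, which yields $\mu_\infty\le a^{-1}\sqrt{K/c}$.

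For the Brooks-type principle I would use exponentially decaying radial test functions. Set $r(y)=d(x,y)$ and, for $s>\mu_\infty/p$, take $u=\e^{-sr}$; since $|\nabla r|=1$ almost everywhere one has $|\nabla u|=s\,u$ a.e., and the layer-cake identity $\int_M\e^{-psr}\,\d\mu=ps\int_0^\infty\e^{-pst}V(x,t)\,\d t$, obtained by integration by parts, shows $u\in L^p(\mu)$ precisely because $ps>\mu_\infty$. Cutting $u$ off with Lipschitz functions $\chi_R$ that equal $1$ on $B(x,R)$, vanish outside $2B$, and satisfy $|\nabla\chi_R|\le C/R$, I would bound $\|\nabla(\chi_R u)\|_p\le s\|\chi_R u\|_p+\|u\,\nabla\chi_R\|_p$ by Minkowski's inequality; here the second term is at most $R^{-1}C(\int_{2B\setminus B}u^p\,\d\mu)^{1/p}\to 0$ since $u\in L^p(\mu)$, while $\|\chi_R u\|_p\to\|u\|_p$. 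Hence the Rayleigh quotient of the admissible (after a routine smoothing, $C_0^\infty$) test function $\chi_R u$ has $\limsup_R$ at most $s^p$, so $\lambda_{\mu,p}(M)\le s^p$; letting $s\downarrow\mu_\infty/p$ gives $\lambda_{\mu,p}(M)\le(\mu_\infty/p)^p$. Combining with $\mu_\infty\le a^{-1}\sqrt{K/c}$ produces exactly $\lambda_{\mu,p}(M)\le(\sqrt{K/c}\,(pa)^{-1})^p$.

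The main obstacle I anticipate is the sharpness bookkeeping at two thresholds. First, the integrability of $u=\e^{-sr}$ and the negligibility of the cutoff gradient in $L^p$ both hinge on the exact threshold $ps>\mu_\infty$ rather than on any subexponential corrections; one must verify that these corrections do not force $s$ above $\mu_\infty/p$. Second, extracting the constant $a^{-1}\sqrt{K/c}$ \emph{exactly}, rather than merely of the right exponential order, requires checking that the prefactor $b/a$, the fixed denominator $\int_0^{r/b}\bs_{-cK}^{1/c}$, and the polynomial factors hidden in the $\sinh$ asymptotics are all logarithmically negligible. Getting this constant sharp is where I would be most careful, since it is the only place where the scaling by $1/a$ in the upper limit $R/a$ feeds into the final answer.
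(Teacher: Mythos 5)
Your proposal is correct, and its engine is the same as the paper's: exponentially decaying radial test functions whose decay rate is tied to $\sqrt{K/c}/(pa)$, with the volume growth controlled by Theorem \ref{bishop_gromov} applied with curvature constant $-K$ (so $\bs_{-cK}$, no angular cutoff). The difference is in the packaging. The paper, following Wang's argument, does everything in one shot: it takes $\phi = \e^{\alpha r}\varphi$ with $\alpha = -(\sqrt{K/c}\,a^{-1}+\delta)/p$ and a unit-width cutoff $\varphi$ (gradient bounded by a constant, supported on $B(x,R)\setminus B(x,R-1)$), splits $|\nabla\phi|^p$ by the elementary inequality $(A+B)^p \le (1+\zeta)^{p-1}A^p + \bigl(\tfrac{1+\zeta}{\zeta}\bigr)^{p-1}B^p$, and kills the error term by showing the ratio $\e^{p\alpha(R-1)}\mu(B(x,R))/\bigl(\e^{p\alpha}\mu(B(x,1))\bigr) \le C_4\e^{-\delta R}\to 0$ directly from the volume bound. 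You instead factor the argument into two independent pieces: a Brooks-type principle $\lambda_{\mu,p}(M)\le(\mu_\infty/p)^p$ for the volume entropy $\mu_\infty$, proved with $u=\e^{-sr}$, $ps>\mu_\infty$, cutoffs with $|\nabla\chi_R|\le C/R$, and error control via $L^p$-integrability of $u$ (layer-cake plus tail decay); and the estimate $\mu_\infty\le a^{-1}\sqrt{K/c}$ from Bishop--Gromov, where all prefactors vanish in the logarithmic limit. Your route buys modularity and slightly cleaner constants (Minkowski's inequality replaces the $\zeta$-splitting, so no $(1+\zeta)^{p-1}$ bookkeeping), and the Brooks-type lemma would apply verbatim to any other curvature condition yielding exponential volume growth; the paper's route is more self-contained and never needs global $L^p$-integrability of the test function, only volume ratios of concentric balls. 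The two points you flag as delicate are indeed the right ones, and both check out: strict inequality $ps>\mu_\infty$ suffices for the layer-cake integral since $V(x,t)\le C_\ez\e^{(\mu_\infty+\ez)t}$ for large $t$ with $\ez$ chosen below the gap, and the constant is exact because $\sinh(\sqrt{cK}\tau)^{1/c}\le 2^{-1/c}\e^{\sqrt{K/c}\,\tau}$ makes every $R$-independent factor disappear in $\limsup_{R}R^{-1}\log V(x,R)$. The only step you should spell out when writing this up is the routine density argument replacing the compactly supported Lipschitz functions $\chi_R u$ by $C_0^\infty$ functions in the Rayleigh quotient, since $\lambda_{\mu,p}$ is defined over $C_0^\infty(M)$.
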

\begin{proof}
We apply the argument in \cite[Theorem 3.2]{linfengwang1}. For an arbitrary $\delta > 0$, we set
\begin{equation*}
\alpha := -\frac{\sqrt{\frac{K}{c}}\frac{1}{a} + \delta}{p}
\end{equation*}
and, for $x\in M$ and $R\geq 2$,
\begin{equation*}
\phi(y) := \exp(\alpha r(y))\varphi(y),
\end{equation*}
where $r(y) = d(x,y)$ and $\varphi$ is a cut off function on $B(x,R)$ such that $\varphi = 1$ on $B(x,R-1)$, $\varphi = 0$ on $M\setminus B(x,R)$ and $|\nabla \varphi| \leq C_3$, where $C_3$ is a constant independent of $R$. For an arbitrary $\zeta > 0$, we have
\begin{eqnarray*}
|\nabla \phi|^p &=&\left|\alpha \e^{\alpha r} \varphi \nabla r+\e^{\alpha r} \nabla \varphi\right|^p \\
& \leq& \e^{p \alpha r}(-\alpha \varphi+|\nabla \varphi|)^p \\
& \leq& \e^{p \alpha r}\left[(1+\zeta)^{p-1}(-\alpha \varphi)^p+\left(\frac{1+\zeta}{\zeta}\right)^{p-1}|\nabla \varphi|^p\right] .
\end{eqnarray*}
By the definition of $\lambda_{\mu,p}(M)$, we find
\begin{eqnarray}
\lambda_{\mu,p}(M) & \leq&(1+\zeta)^{p-1}(-\alpha)^p+\left(\frac{1+\zeta}{\zeta}\right)^{p-1} \frac{\int_M \e^{p \alpha r}|\nabla \varphi|^p \d\mu}{\int_M \e^{p \alpha r} \varphi^p \d\mu}\nonumber \\
&=&(1+\zeta)^{p-1}(-\alpha)^p+\left(\frac{1+\zeta}{\zeta}\right)^{p-1} \frac{\int_{B(x,R) \backslash B(x,R-1)} \e^{p \alpha r}|\nabla \varphi|^p \d\mu}{\int_{B(x,R)} \e^{p \alpha r} \varphi^p \d\mu}\nonumber \\
& \leq&(1+\zeta)^{p-1}(-\alpha)^p+C_3^p\left(\frac{1+\zeta}{\zeta}\right)^{p-1} \frac{\e^{p \alpha (R-1)} \mu\left(B(x,R)\right)}{\int_{B(x,1)} \e^{p \alpha r} \d\mu} \nonumber\\
& \leq&(1+\zeta)^{p-1}(-\alpha)^p+C_3^p\left(\frac{1+\zeta}{\zeta}\right)^{p-1} \frac{\e^{p \alpha (R-1)} \mu\left(B(x,R)\right)}{\e^{p \alpha} \mu\left(B(x,1)\right)}\label{moto-15} .
\end{eqnarray}
It follows from Theorem \ref{bishop_gromov} that
\begin{equation}\label{volume_comparison_1}
\mu(B(x,R))\leq \mu(B(x,1))\frac{b}{a}
\frac{\int_0^{R/a} \bs_{-cK}(\tau)^{1/c} \,\d\tau}{\int_0^{1/b} \bs_{-cK}(\tau)^{1/c} \,\d\tau} .
\end{equation}
To estimate the RHS of (\ref{volume_comparison_1}), we observe
\begin{eqnarray*}
(\sqrt{cK})^{1/c}\int_0^{R/a}\bs_{-cK}^{1/c}(\tau)\d\tau &=& \int_0^{R/a}\left[\frac{1}{2}\left\{\exp(\sqrt{cK}\tau) - \exp(-\sqrt{cK}\tau)\right\}\right]^{1/c}\d\tau\\
&\leq&\int_0^{R/a} \exp\left(\sqrt{\frac{K}{c}}\tau\right)\d\tau \\
&=& \sqrt{\frac{c}{K}}\left\{\exp\left(\sqrt{\frac{K}{c}}\frac{R}{a}\right)-1\right\}.
\end{eqnarray*}
Thus, we have
\begin{equation*}
\mu(B(x,R))\leq \mu(B(x,1))\frac{b}{a} \frac{1}{\int_0^{1/b} \bs_{-cK}(\tau)^{1/c} \,\d\tau}\sqrt{\frac{c}{K}}\exp\left(\sqrt{\frac{K}{c}}\frac{R}{a}\right)\frac{1}{(\sqrt{cK})^{1/c}}.
\end{equation*}
This implies
$$
\frac{\e^{p \alpha(R-1)} \mu\left(B(x,R)\right)}{\e^{p \alpha} \mu\left(B(x,1)\right)} \leq C_4 \exp \left(p\alpha R+ \sqrt{\frac{K}{c}}\frac{R}{a} \right) = C_4\exp(-\delta R) \rightarrow 0
$$
as $R \rightarrow \infty$, where $C_4$ is a constant depending on $c,a,b,K,\delta.$ Hence, (\ref{moto-15}) yields
$$
\lambda_{\mu,p}(M) \leq(1+\zeta)^{p-1}(-\alpha)^p .
$$
Since $\zeta>0$ and $\delta>0$ are arbitrary, this implies the theorem.
\end{proof}
When $m\in [n,\infty)$, $\eps = 1$ and $a = b = 1$, then $c = \frac{1}{m-1}$ and it holds
\begin{equation*}
\lambda_{\mu,p}(M)\leq \left(\frac{\sqrt{(m-1)K}}{p}\right)^p,
\end{equation*}
which recovers Theorem \ref{cheng-type-m-inequality}.
 
\section{Functional inequalities with $\eps$-range}
\subsection{Local Poincar\'{e} inequality}
In this subsection, we prove the following Poincar\'{e} inequality.
\begin{theorem}(Local Poincaré inequality)\label{poincare_thm}
Let $(M,g,\mu)$ be an $n$-dimensional complete weighted Riemannian manifold and $m \in (-\infty,1] \cup [n,+\infty]$,
$\ez \in \R$ in the $\ez$-range \eqref{epsilin-range}, $K > 0$ and $b \ge a>0$.
Assume that
\[ \Ric_\psi^m(v)\ge -K\e^{\frac{4(\ez-1)}{n-1}\psi(x)} g(v,v) \]
holds for all $v \in T_xM \setminus 0$ and
\begin{equation}
a \le \e^{-\frac{2(\ez-1)}{n-1}\psi} \le b.
\end{equation} Then we have
\begin{equation}\label{poincare_inequality}
\forall f \in C^{\infty}(M), \quad \int_{B}|f - f_B|^2 \d\mu \leq 2^{n+3}\left(\frac{2b}{a}\right)^{1/c}\exp\left(\sqrt{\frac{K}{c}}\frac{2r}{a}\right)r^2\int_{2B}|\nabla f|^2 \d\mu
\end{equation}
for all balls $B \subset M$ of radius $0<r<\infty$, where
\begin{equation*}
f_B := \frac{1}{\mu(B)}\int_B f \d\mu.
\end{equation*}
\end{theorem}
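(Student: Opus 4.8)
The plan is to reduce \eqref{poincare_inequality} to a weighted \emph{segment inequality}, the only geometric input being the Jacobian monotonicity \eqref{non_increasing_property} that already drives Theorem \ref{bishop_gromov}. I would start from the identity
\[ \int_B |f-f_B|^2\,\d\mu = \frac{1}{2\mu(B)}\int_B\int_B |f(x)-f(y)|^2\,\d\mu(x)\,\d\mu(y), \]
valid for any ball $B$ of finite measure. Fixing for each pair $x,y\in B$ a minimal geodesic $\gamma_{xy}$ and using $d(x,y)<2r$ together with the Cauchy--Schwarz inequality along $\gamma_{xy}$, one gets $|f(x)-f(y)|^2\le 2r\int_0^{d(x,y)}|\nabla f(\gamma_{xy}(s))|^2\,\d s$. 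Writing $\mathcal{F}_g(x,y):=\int_0^{d(x,y)} g(\gamma_{xy}(s))\,\d s$ for $g\ge 0$, this turns the problem into the segment inequality
\[ \int_B\int_B \mathcal{F}_{|\nabla f|^2}(x,y)\,\d\mu(x)\,\d\mu(y)\le C_{\mathrm{seg}}\, r\,\mu(B)\int_{2B}|\nabla f|^2\,\d\mu, \]
where $C_{\mathrm{seg}}$ is to be identified with the constant of \eqref{poincare_inequality} after accounting for the two factors above; here I use that every minimal geodesic joining two points of $B$ stays inside $2B$, since the triangle inequality gives $d(x_0,\gamma_{xy}(s))<2r$ along it.

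To prove the segment inequality I would split each geodesic at its midpoint: by the symmetry $x\leftrightarrow y$ it suffices to control the contribution of the half $s\in[0,d(x,y)/2]$ nearest $x$. Fixing $x$ and passing to geodesic polar coordinates $y=\exp_x(\rho\theta)$, in which $\d\mu(y)=\mathcal{A}_x(\rho,\theta)\,\d\rho\,\d\theta$ with $\mathcal{A}_x(\rho,\theta)=(\e^{-\psi}h_0^{n-1})(\rho)$ along the ray $\theta$, interchanging the $s$- and $\rho$-integrations and then changing variables to the intermediate point $w=\exp_x(s\theta)$ rewrites the left-hand side as $\int_{2B} g(w)\,\Theta(w)\,\d\mu(w)$ for an explicit occupation kernel
\[ \Theta(w)=\int_{B}\frac{1}{\mathcal{A}_x(\ell,\theta_w)}\Big(\int_{2\ell}^{R_x(\theta_w)}\mathcal{A}_x(\rho,\theta_w)\,\d\rho\Big)\,\d\mu(x),\qquad \ell=d(x,w), \]
where $\theta_w$ is the direction from $x$ to $w$ and $R_x(\theta)<2r$ is the exit radius of $B$. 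Thus the segment inequality reduces to the pointwise bound $\Theta(w)\le C_{\mathrm{seg}}\,r\,\mu(B)$. Here the monotonicity \eqref{non_increasing_property} controls $\mathcal{A}_x$ at a far parameter by its value at a smaller one times the ratio of the model densities $\bs_{-cK}(\varphi_\theta(\cdot))^{1/c}$, and the weight bounds \eqref{eq:wab}, i.e.\ $\rho/b\le\varphi_\theta(\rho)\le\rho/a$, turn these into metric quantities; estimating the resulting $\bs_{-cK}$-ratio exactly as in the proof of Theorem \ref{cheng-type-epsilon-range} produces the doubling factor $(2b/a)^{1/c}\exp(\sqrt{K/c}\,2r/a)$, while the dimensional factor $2^{n+3}$ comes from the solid-angle comparison.

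The main obstacle is the singularity of $\Theta$ along the diagonal $x=w$. A pointwise estimate is hopeless: as $s\to 0$ the ratio $\int_{2s}^{R_x}\mathcal{A}_x(\rho,\theta)\,\d\rho/\mathcal{A}_x(s,\theta)$ diverges like $s^{-(n-1)}$ (and the cruder monotonicity bound even gives $s^{-1/c}$, with $1/c\ge n-1$ throughout the $\eps$-range), so neither the comparison to the value at $s$ nor integrating over the whole geodesic can succeed. The role of the midpoint splitting is exactly that, after the order of the $x$- and $y$-integrations has been interchanged, the factor $1/\mathcal{A}_x(\ell,\theta_w)\sim \ell^{-(n-1)}$ in the kernel is integrated against $\d\mu(x)$, whose density degenerates like $\ell^{n-1}$ near the diagonal; the two powers cancel and $\Theta(w)$ is finite for every $\eps$ in the range. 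Making this cancellation quantitative is the real work: it requires comparing the weighted Jacobians based at the two endpoints $x$ and $w$ along their common geodesic, using \eqref{non_increasing_property} in both directions, and it is here that the constant $C_{\mathrm{seg}}$ is pinned down. The cut locus is negligible, being of measure zero, and the minimal geodesics may be selected measurably on its complement.
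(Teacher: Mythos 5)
Your overall reduction (the difference identity, Cauchy--Schwarz along geodesics, midpoint splitting, change of variables to the intermediate point, with \eqref{non_increasing_property} as the only geometric input) is exactly the skeleton of the paper's proof, which follows Saloff-Coste. But there is a genuine gap, and it is created by how you paired the splitting with the integration: you control the half $s\in[0,d(x,y)/2]$ \emph{nearest} $x$ while integrating $y$ in polar coordinates \emph{based at} $x$. That is the wrong pairing. With it, the intermediate point $w=\exp_x(s\theta)$ satisfies $s\le\rho/2$, and \eqref{non_increasing_property} compares $\mathcal{A}_x(\rho,\theta)$ to $\mathcal{A}_x(s,\theta)$ only up to the model ratio $\bs_{-cK}(\varphi_\eta(\rho))^{1/c}/\bs_{-cK}(\varphi_\eta(s))^{1/c}$, which blows up at the diagonal --- the singularity you yourself identify --- and since $1/c\ge n-1$ can be arbitrarily large within the $\eps$-range ($c\to 0$ as $\eps^2(m-n)/(m-1)\to 1$), the resulting bound $\sim\ell^{-1/c}$ is not even integrable against $\d\mu(x)\sim\ell^{n-1}\,\d\ell$. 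Your proposed repair --- a quantitative comparison of the weighted Jacobians based at the two endpoints $x$ and $w$, ``using \eqref{non_increasing_property} in both directions'' --- is precisely the step you leave undone (``the real work''), and as described it cannot be extracted from the stated hypotheses: by the symmetry of the unweighted exponential Jacobian (the van Vleck/Ruse determinant), the discrepancy between the polar densities based at $x$ and at $w$ along their common geodesic is exactly the factor $\e^{\psi(w)-\psi(x)}$, and the pinching \eqref{eq:wab} gives \emph{no} control on this when $\eps=1$ (the condition then reads $a\le 1\le b$ and $\psi$ is unconstrained) --- yet $\eps=1$, $m\in(n,\infty)$ is exactly the classical constant-bound case the theorem must cover. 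So the kernel bound $\Theta(w)\le C_{\mathrm{seg}}\,r\,\mu(B)$ is true, but not provable by the route you sketch.

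The fix is to use your symmetry observation the other way around: the half nearest $x$ should be handled by fixing $y$ and integrating over $x$, or equivalently, after relabeling, one controls the half nearest $y$ (parameters $t\in[1/2,1]$) in polar coordinates based at $x$. Then the intermediate point lies at distance $s\ge \rho/2$ from the base point of the coordinates, so \eqref{non_increasing_property} together with $\rho/b\le\varphi_\eta(\rho)\le\rho/a$ yields the \emph{uniform} Jacobian lower bound $J_{x,t}(y)\ge (1/2)^n(a/2b)^{1/c}\exp\bigl(-\sqrt{K/c}\,2r/a\bigr)$, with no diagonal singularity at all, and the change of variables $z=\Phi_{x,t}(y)\in 2B$ finishes the proof immediately. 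This is exactly what the paper does via the maps $\Phi_{x,t}$, $t\in[1/2,1]$; with that correction your argument closes, and your sharper starting identity (with $\tfrac{1}{2\mu(B)}$ in place of Jensen) would even improve the constant $2^{n+3}$ slightly.
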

\begin{proof}
We apply the argument in \cite[Theorem 5.6.6, Lemma 5.6.7]{salof2}.
For any pair of points $(x, y) \in M \times M$, let
$$
\gamma_{x, y}:[0, d(x, y)] \rightarrow M
$$
be a geodesic from $x$ to $y$ parametrized by arclength.
We also set
\begin{equation*}
l_{x,y}(t) = \gamma_{x,y}(td(x,y))
\end{equation*}
for $t\in[0,1]$.
We have, using Jensen's inequality,
\begin{eqnarray*}
\int_B\left|f-f_B\right|^2 \d\mu 
&=& \int_B\left|\int_B (f(x) - f(y))\frac{\d\mu(y)}{\mu(B)}\right|^2\d\mu(x)\\
&\leq & \frac{1}{\mu(B)}\int_B\int_B |f(l_{x,y}(1)) - f(l_{x,y}(0))|^2 \d\mu(x)\ \d\mu(y)\\
&\leq & \frac{1}{\mu(B)}\int_B \int_B \left\{\int_0^{1} \left|\frac{\d(f\circ l_{x,y})}{\d t}(t)\right| \d t \right\}^2 \d\mu(x)\ \d\mu(y)\\
&\leq & \frac{1}{\mu(B)}\int_B\int_B \int_0^{1} \left|\frac{\d(f\circ l_{x,y})}{\d t}(t)\right|^2 \d t \ \d\mu(x)\ \d\mu(y)\\
&=&\frac{2}{\mu(B)} \int_B \int_B \int_{1 / 2}^{1}\left|\frac{\d(f\circ l_{x,y})}{\d t}(t)\right|^2 \d t \ \d\mu(x)\ \d\mu(y).
\end{eqnarray*}
To obtain the last equality we decompose the set
$$
\{(x, y, t): x, y \in B, t \in(0,1)\}
$$
into two pieces,
$$
\{(x, y, s): x, y \in B, t\in(1 / 2, 1)\}
$$
and
$$
\left\{(x, y, s): x, y \in B, t \in(0,1 / 2)\right\},
$$
then use $l_{x,y}(t) = l_{y,x}(1-t)$.
Now, suppose that we can bound the Jacobian $J_{x, t}$ of the map
$$
\Phi_{x, t}: y \mapsto l_{x,y}(t)
$$
from below by
\begin{equation}\label{moto-lem-5-6-7}
\forall x, y \in B, \forall s \in[1 / 2, 1], \quad J_{x, t}(y) \geq 1 / F(r),
\end{equation}
where $r$ is the radius of the ball $B$. Then
\begin{eqnarray*}
\int_B\int_B\int_{1/2}^1 \left|\frac{\d f(l_{x,y}(t))}{\d t}\right|^2 \d t\  \d\mu(x)\ \d\mu(y) &\leq& F(r)\int_B\int_B\int_{1/2}^1\left|\frac{\d f(l_{x,y}(t))}{\d t}\right|^2J_{x,t}(y)\d t\ \d\mu(x)\ \d\mu(y)\\
&\leq& F(r)\int_B\int_B\int_{0}^1\left|\nabla f(l_{x,y}(t))\right|^2 d(x,y)^2 J_{x,t}(y)\d t \ \d\mu(x)\ \d\mu(y)\\
&\leq& (2r)^2F(r) \int_0^1\int_B \int_B \left|\nabla f(l_{x,y}(t)) \right|^2 J_{x,t}(y) \d\mu(y) \ \d\mu(x)\ \d t\\
&=& (2r)^2 F(r)\int_0^1\left[ \int_B \left(\int_{\Phi_{x,t}(B)} \left| \nabla f(z) \right|^2 \d\mu(z) \right)\d\mu(x)\right] \d t\\
&\leq & (2r)^2 F(r) \int_0^1\left[ \int_B \left(\int_{2B} \left| \nabla f(z) \right|^2 \d\mu(z) \right)\d\mu(x)\right] \d t\\
&\leq & (2r)^2 F(r) \mu(B) \int_{2B} |\nabla f(z) |^2 \d\mu(z).
\end{eqnarray*}
 
We finally prove (\ref{moto-lem-5-6-7}).
Let $\xi$ be the unit tangent vector at $x$ such that $\left.\partial_s \gamma_{x, y}(s)\right|_{s=0}=\xi$. Let $I(x, s, \xi)$ be the Jacobian of the map $\exp_x : T_xM\rightarrow M$ at $s\xi$ with respect to $\mu$. Then
$$
\d\mu=I(x, s, \xi) \d s \ \d \xi,
$$
where $\d \xi$ is the usual measure on the sphere. Using the notation in Subsection \ref{epsilon-range-section}, we have $I(x,s,\xi) = \e^{-\psi(\eta(s))}h_0^{n-1}(s)$.
According to (\ref{non_increasing_property}), we find that
$$
s \rightarrow \frac{I(x, s, \xi)}{\bs_{-cK}(\varphi_\eta(s))^{1/c}}
$$
is non-increasing. Under the relationship $l_{x,y}(t) = \gamma_{x,y}(s)$, it follows that
$$
J_{x, t}(y)= \left(\frac{s}{d(x,y)}\right)^n \frac{I(x, s, \xi)}{I(x, d(x, y), \xi)} \geq \left(\frac{1}{2}\right)^n\frac{\bs_{-cK}(\varphi_\eta(s))^{1/c}}{\bs_{-cK}(\varphi_\eta(d(x,y)))^{1/c}}
$$
for all $s \in(d(x,y)/2, d(x, y))$.
Thus, we have, since $s/b \leq \varphi_\eta(t)\leq s/a$,
\begin{eqnarray*}
J_{x,t}(y) &\geq&\left(\frac{1}{2}\right)^n \frac{\bs_{-cK}(\varphi_\eta(d(x,y)/2))^{1/c}}{\bs_{-cK}(\varphi_\eta(d(x,y)))^{1/c}}\\
&\geq&\left(\frac{1}{2}\right)^n \left(\frac{\varphi_\eta(d(x,y)/2)}{\varphi_\eta(d(x,y))}\right)^{1/c}\exp\left(-\sqrt{\frac{K}{c}}\varphi_\eta(d(x,y))\right)\\
&\geq& \left(\frac{1}{2}\right)^n\left(\frac{ad(x,y)/2}{bd(x,y)}\right)^{1/c}\exp\left(-\sqrt{\frac{K}{c}}\frac{d(x,y)}{a}\right)\\
&\geq& \left(\frac{1}{2}\right)^n\left(\frac{a}{2b}\right)^{1/c}\exp\left(-\sqrt{\frac{K}{c}}\frac{2r}{a}\right).
\end{eqnarray*}
This proves (\ref{moto-lem-5-6-7}) with $F(r) = \left\{\left(\frac{1}{2}\right)^n\left(\frac{a}{2b}\right)^{1/c}\exp\left(-\sqrt{\frac{K}{c}}\frac{2r}{a}\right)\right\}^{-1}$ and the theorem follows.
\end{proof}

Given that we have the local Poincar\'{e} inequality and the volume doubling property (obtained explicitly later in \eqref{volume-doubling-property}), we can apply \cite[Corollary 5.3.5]{salof2} and we obtain the following inequality.
\begin{corollary}\textit{
    Under the same assumptions as in Theorem \ref{poincare_thm}, there exist constants $C,P$ such that
    \begin{equation*}
        \forall f \in C^{\infty}(M), \quad \int_B |f - f_B|^2 \d\mu \leq P\e^{Cr} r^2 \int_B |\nabla f|^2 \d\mu
    \end{equation*}
    for all balls $B\subset M$ of radius $r>0$.}
\end{corollary}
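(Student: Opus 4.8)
The plan is to deduce this strong $(2,2)$-Poincar\'{e} inequality from its weak counterpart, Theorem \ref{poincare_thm}, by invoking the self-improvement mechanism of \cite[Corollary 5.3.5]{salof2}. The only difference between the two statements is that the gradient on the right-hand side is here integrated over $B$ rather than over the dilate $2B$; closing this gap is exactly the content of the abstract ``weak-plus-doubling implies strong'' result of Saloff-Coste, whose proof rests on a Whitney-type covering and chaining argument. Thus the whole matter reduces to checking that the two hypotheses of that corollary are in force and to tracking how the resulting constant depends on the radius $r$.

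First I would record the two ingredients. Theorem \ref{poincare_thm} already furnishes the weak inequality with dilation factor $2$, namely
\[
\int_B |f - f_B|^2 \, \d\mu \le P_1(r)\, r^2 \int_{2B} |\nabla f|^2 \, \d\mu,
\qquad
P_1(r) = 2^{n+3}\Bigl(\tfrac{2b}{a}\Bigr)^{1/c}\exp\Bigl(\sqrt{\tfrac{K}{c}}\,\tfrac{2r}{a}\Bigr).
\]
For the second ingredient I would apply the Bishop-Gromov comparison theorem (Theorem \ref{bishop_gromov}, with the curvature lower bound $-K$) to the pair of radii $r$ and $2r$. Since $\bs_{-cK}(\tau)^{1/c}$ grows like $\exp(\sqrt{K/c}\,\tau)$ as $\tau\to\infty$ while behaving like $\tau^{1/c}$ near $0$, the ratio of the two integrals appearing in Theorem \ref{bishop_gromov} is bounded by an expression of the form $P_0\,\e^{C_0 r}$, which gives the local volume doubling property $\mu(B(x,2r))\le P_0\,\e^{C_0 r}\,\mu(B(x,r))$ recorded in \eqref{volume-doubling-property}.

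With both hypotheses available on every ball, \cite[Corollary 5.3.5]{salof2} yields the strong inequality $\int_B |f-f_B|^2\,\d\mu\le P_2(r)\,r^2\int_B|\nabla f|^2\,\d\mu$, where $P_2(r)$ is an explicit function of the weak-Poincar\'{e} constant $P_1(r)$ and the doubling constant $D(r)=P_0\,\e^{C_0 r}$ (together with the dilation factor $2$ and the dimension). The final step is a bookkeeping one: both $P_1(r)$ and $D(r)$ grow at most exponentially in $r$, and the polynomial prefactors together with the factor $r^2$ are absorbed into the exponential, so that $P_2(r)\le P\,\e^{Cr}$ for suitable constants $P,C$ depending only on $n,c,a,b,K$. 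This produces the asserted bound.

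The main obstacle I anticipate is not any single estimate but the careful propagation of the $r$-dependence through the abstract machinery. Because $K>0$, neither the doubling constant nor the weak-Poincar\'{e} constant is scale invariant---both blow up exponentially in $r$---so the hypotheses of \cite[Corollary 5.3.5]{salof2} hold only locally, with $r$-dependent constants. I would therefore need to verify that the covering and chaining argument underlying that corollary, when run on a ball of radius $r$, only involves balls of radius comparable to $r$, so that the number of chaining steps and the powers of the doubling constant that enter remain controlled by a factor that is again at most exponential in $r$; this is what guarantees the clean final form $P\,\e^{Cr}\,r^2$ rather than a worse (for instance double-exponential) dependence.
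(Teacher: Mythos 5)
Your proposal matches the paper's own argument: the paper likewise deduces the corollary by citing \cite[Corollary 5.3.5]{salof2}, with the weak $(2,2)$-Poincar\'{e} inequality supplied by Theorem \ref{poincare_thm} and the volume doubling property \eqref{volume-doubling-property} obtained from Theorem \ref{bishop_gromov}. Your additional bookkeeping on the $r$-dependence of the constants (both blowing up at most exponentially, hence the final form $P\e^{Cr}r^2$) is exactly the point the paper leaves implicit, so the two arguments are essentially identical.
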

 
\subsection{Local Sobolev inequality}\label{Sobolev_section}
It is shown in \cite{salof1} that the volume doubling property and Poincar\'{e} inequality imply a local Sobolev inequality. We follow this line with Theorems \ref{bishop_gromov} and \ref{poincare_thm}.
\begin{theorem}(Local Sobolev inequality)\label{sobolev_thm}
Let $(M,g,\mu)$ be an $n$-dimensional complete weighted Riemannian manifold with $n \geq 3$ and $m \in (-\infty,1] \cup [n,+\infty]$,
$\ez \in \R$ in the $\ez$-range \eqref{epsilin-range}, $K > 0$ and $b \ge a>0$.
Assume that
\[ \Ric_\psi^m(v)\ge -K\e^{\frac{4(\ez-1)}{n-1}\psi(x)} g(v,v) \]
holds for all $v \in T_xM \setminus 0$ and
\begin{equation}\label{measure_pinching}
a \le \e^{-\frac{2(\ez-1)}{n-1}\psi} \le b.
\end{equation} Then there exist constants $D, E$ depending on $c,a,b, n$ such that for all $B(o,r)\subset M$ we have for $f\in C_0^{\infty}(B(o,r))$,
\begin{equation*}
\left(\mu(B(o,r))^{-1}\int_{B(o,r)}|f|^{\frac{2(1 + c)}{1-c}}\d\mu\right)^{\frac{1-c}{1 + c}}\leq E\exp\left(D\left(1 + \sqrt{\frac{K}{c}}\right)\frac{r}{a}\right)r^2 \mu(B(o,r))^{-1}\int_{B(o,r)}(|\nabla f|^2 + r^{-2} f^2)\d\mu.
\end{equation*}
\end{theorem}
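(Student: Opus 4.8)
The plan is to follow the by-now standard route of Saloff-Coste, by which a volume doubling property together with a local Poincar\'{e} inequality yields a local Sobolev inequality; the quantitative packaging is supplied by the equivalence in Theorem \ref{proof_ingredients_analysis}. The correct local dimension is $\nu := 1 + 1/c$, since the Sobolev exponent in the statement satisfies $\frac{2\nu}{\nu-2} = \frac{2(1+c)}{1-c}$. Because the $\eps$-range \eqref{epsilin-range} forces $c \le (n-1)^{-1}$, the hypothesis $n \ge 3$ guarantees $\nu \ge 3 > 2$, which is precisely the regime in which Theorem \ref{proof_ingredients_analysis} applies. I would take $A$ to be the Dirichlet weighted Laplacian on $B(o,r)$, so that $\|A^{1/2}f\|_2^2 = \int_{B(o,r)} |\nabla f|^2 \, \d\mu$ for $f \in C_0^\infty(B(o,r))$, set $t_0 = r^2$, and aim to verify condition 2 of that theorem, whose homogeneity ($C_1 \sim r^2\,\mu(B(o,r))^{-2/\nu}$) matches the asserted inequality once both sides are normalized by $\mu(B(o,r))^{-1}$.

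First I would extract two geometric facts from the Bishop-Gromov comparison (Theorem \ref{bishop_gromov}). Taking $R = 2s$ gives the volume doubling property $V(x,2s) \le C_D\, V(x,s)$ with an explicit $C_D$ depending on $c,a,b,n,K$ and growing like $\exp(\mathrm{const}\cdot\sqrt{K/c}\,s/a)$; this is the bound recorded as \eqref{volume-doubling-property}. Second, and crucially for the sharp exponent, I would use the small-scale asymptotics $\bs_{-cK}(\tau)^{1/c} \sim \tau^{1/c}$ to obtain, for $x \in B(o,r)$ and $0 < s \le r$, a relative volume lower bound $V(x,s) \ge \kappa\, s^{\nu}$ with $\nu = 1 + 1/c$, where $\kappa$ is comparable to $V(o,r)$ divided by $\int_0^{2r/a}\bs_{-cK}(\tau)^{1/c}\,\d\tau$, together with the pinching factors from \eqref{measure_pinching}. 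It is essential that this exponent be exactly $1 + 1/c$, and not the cruder doubling exponent $\log_2 C_D$; this is why I use the precise integral comparison rather than doubling alone.

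With these in hand I would pass through the Nash inequality (condition 3). From the local Poincar\'{e} inequality of Theorem \ref{poincare_thm} combined with doubling, I would first derive a pseudo-Poincar\'{e} inequality $\|f - f_s\|_2^2 \le C s^2 \int|\nabla f|^2\,\d\mu$ for the local averages $f_s(x) = V(x,s)^{-1}\int_{B(x,s)} f\,\d\mu$ and $0 < s \le r$. The volume lower bound gives $\|f_s\|_\infty \le \kappa^{-1} s^{-\nu}\|f\|_1$, and hence (up to the doubling constant) $\|f_s\|_2^2 \lesssim \kappa^{-1}s^{-\nu}\|f\|_1^2$; splitting $\|f\|_2^2 \le 2\|f - f_s\|_2^2 + 2\|f_s\|_2^2$ and optimizing in $s \in (0,r]$ (capping at $s=r$, which produces the term $t_0^{-1}\|f\|_2^2 = r^{-2}\|f\|_2^2$) yields condition 3 with $t_0 = r^2$ and an explicit Nash constant $C_2$. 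Finally, the implications $3 \Rightarrow 1 \Rightarrow 2$ of Theorem \ref{proof_ingredients_analysis} give condition 2 with $C_1$ linear in $C_2$; rewriting this as $\|f\|_{2\nu/(\nu-2)}^2 \le C\,r^2\, V(o,r)^{-2/\nu}\bigl(\|\nabla f\|_2^2 + r^{-2}\|f\|_2^2\bigr)$ and redistributing the factor $\mu(B(o,r))^{-1}$ is exactly the asserted inequality.

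The main obstacle is bookkeeping rather than conceptual: I must track how the exponential factors from the doubling constant $C_D$, from the Poincar\'{e} constant $2^{n+3}(2b/a)^{1/c}\exp(2\sqrt{K/c}\,r/a)\,r^2$ of Theorem \ref{poincare_thm}, and from the volume-ratio factor hidden in $\kappa$ propagate through the pseudo-Poincar\'{e} step and the $s$-optimization, and then check that they can all be absorbed into a single clean factor of the form $E\exp\bigl(D(1 + \sqrt{K/c})\,r/a\bigr)$ with $D,E$ depending only on $c,a,b,n$. Care is likewise needed to confirm the homogeneity so that the constant $C_1$ produced by Theorem \ref{proof_ingredients_analysis} indeed carries the factor $r^2\,\mu(B(o,r))^{-2/\nu}$, ensuring that after normalization one recovers exactly the exponents $(1-c)/(1+c)$ and $2(1+c)/(1-c)$.
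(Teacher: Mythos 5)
Your proposal is correct and follows essentially the same route as the paper: both implement Saloff-Coste's scheme, extracting from Theorem \ref{bishop_gromov} the doubling property and the relative volume lower bound that yields $\|f_s\|_\infty \lesssim \kappa^{-1}s^{-\nu}\|f\|_1$ with $\nu = 1+1/c$, deriving from Theorem \ref{poincare_thm} plus a covering argument the pseudo-Poincar\'{e} inequality $\|f-f_s\|_2 \lesssim s\|\nabla f\|_2$ (the paper's Lemmas \ref{lemma_1} and \ref{lemma_2}), optimizing in $s$ to reach the Nash-type condition 3 of Theorem \ref{proof_ingredients_analysis} with $t_0 = r^2$, and concluding via $3\Rightarrow 1\Rightarrow 2$ together with $c\le (n-1)^{-1}$, hence $\nu \ge n > 2$. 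The only cosmetic difference is that you cap the optimization at $s=r$ to generate the $r^{-2}\|f\|_2^2$ term, whereas the paper inserts the term $4C_6 s\, r^{-1}\|f\|_2$ into \eqref{robinson} so that the inequality holds for all $s>0$ and the unconstrained minimizer \eqref{value_of_s} can be substituted; both devices yield the same conclusion.
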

We first prove two lemmas. We set
\begin{equation*}
f_s(x) = \int \chi_s(x,z)f(z)\d\mu(z),
\end{equation*}
where $V(x,s) = \mu(B(x,s))$ and $\chi_s(x,z) = \frac{1}{V(x,s)}1_{B(x,s)}(z)$.
\begin{lemma}\label{lemma_1}
Under the same assumptions as in Theorem \ref{sobolev_thm},
there exists a constant $C_5$ 
such that for all $y\in M$ and all $0 < s < r$, we have
\begin{equation*}
\|f_s\|_2 \leq C_5V^{-\frac{1}{2}}\left(\frac{r}{s}\right)^{\frac{1}{2}\left(1 + \frac{1}{c}\right)}\|f\|_1,
\end{equation*}
for all $f\in C^{\infty}_0(B)$, where $B = B(y,r)$ and $V = V(r) = V(y,r)$.
\end{lemma}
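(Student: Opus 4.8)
The plan is to read $f_s$ as the image of $f$ under the averaging operator $A_s\colon f\mapsto f_s$ with the (non-symmetric) kernel $\chi_s(x,z)=V(x,s)^{-1}\1_{B(x,s)}(z)$, and to establish the asserted inequality as an $L^1_\mu\to L^2_\mu$ bound for $A_s$, in the spirit of the Coulhon--Saloff-Coste machinery of \cite{salof1,salof2}. The only geometric input needed is volume doubling, which I will extract directly from the Bishop--Gromov comparison (Theorem \ref{bishop_gromov}); the exponent $\tfrac12\left(1+\tfrac1c\right)$ will appear as half of the effective dimension $\nu:=1+\tfrac1c=\tfrac{1+c}{c}$, the very $\nu$ for which $\tfrac{2\nu}{\nu-2}=\tfrac{2(1+c)}{1-c}$ is the Sobolev exponent of Theorem \ref{sobolev_thm}.

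I would start from the pointwise bound $|f_s(x)|\le V(x,s)^{-1}\int_{B(x,s)}|f|\,\d\mu$ and observe that, since $\supp f\subset B=B(y,r)$ and $s<r$, the function $f_s$ is supported in $B(y,2r)$. Squaring and using $\bigl(\int_{B(x,s)}|f|\,\d\mu\bigr)^2\le\|f\|_1\int_{B(x,s)}|f|\,\d\mu$, I would peel off one factor $V(x,s)^{-1}$ by volume comparison: for $x\in B(y,2r)$ one has $B(y,r)\subset B(x,3r)$, whence $V\le V(x,3r)\le C(3r,s)\,V(x,s)$, where $C(\rho,s)$ denotes the Bishop--Gromov doubling ratio from scale $s$ up to scale $\rho$. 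This gives $V(x,s)^{-1}\le C(3r,s)\,V^{-1}$ throughout the support of $f_s$.

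Integrating in $x$ over $B(y,2r)$ and applying Fubini's theorem, the remaining double integral becomes $\int|f(z)|\bigl(\int_{B(z,s)}V(x,s)^{-1}\,\d\mu(x)\bigr)\,\d\mu(z)$. For $x\in B(z,s)$ one has $B(z,s)\subset B(x,2s)$, so $V(z,s)\le V(x,2s)\le C(2s,s)\,V(x,s)$ and therefore $\int_{B(z,s)}V(x,s)^{-1}\,\d\mu(x)\le C(2s,s)$. Collecting the estimates yields $\|f_s\|_2^2\le C(3r,s)\,C(2s,s)\,V^{-1}\|f\|_1^2$, so the whole problem reduces to controlling the two doubling ratios.

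The main work --- and the step I expect to be delicate --- is to show that these ratios carry exactly the power $(r/s)^{1+1/c}$. Theorem \ref{bishop_gromov}, applied with curvature bound $-K$ so that the comparison function is $\bs_{-cK}$, gives $C(\rho,s)=\tfrac{b}{a}\bigl(\int_0^{\rho/a}\bs_{-cK}(\tau)^{1/c}\,\d\tau\bigr)\big/\bigl(\int_0^{s/b}\bs_{-cK}(\tau)^{1/c}\,\d\tau\bigr)$. I would then invoke the two elementary inequalities $u\le\sinh u\le u\,\e^{u}$ for $u\ge0$, which translate into $\tau^{1/c}\le\bs_{-cK}(\tau)^{1/c}\le\tau^{1/c}\e^{\sqrt{K/c}\,\tau}$, to bound the numerator by $\tfrac{c}{c+1}(\rho/a)^{1+1/c}\e^{\sqrt{K/c}\,\rho/a}$ and the denominator from below by $\tfrac{c}{c+1}(s/b)^{1+1/c}$. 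This produces $C(3r,s)\le C'(r/s)^{1+1/c}\e^{\sqrt{K/c}\,3r/a}$, while the same bookkeeping gives $C(2s,s)\le C''\e^{\sqrt{K/c}\,2s/a}$ with no negative power of $s$ surviving; the crucial point is that retaining the polynomial factor $\tau^{1/c}$ (and not merely the exponential) in the numerator is what keeps $C(2s,s)$ bounded as $s\to0$. Taking square roots and using $s<r$ to absorb the exponentials into a single constant $C_5$ --- which unavoidably carries a factor $\e^{\sqrt{K/c}\,\gamma r/a}$, and it is this factor that later produces the $\exp\bigl(D(1+\sqrt{K/c})r/a\bigr)$ of Theorem \ref{sobolev_thm} --- yields $\|f_s\|_2\le C_5\,V^{-1/2}(r/s)^{\frac12(1+1/c)}\|f\|_1$, as claimed. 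I expect the only real subtlety to be this two-sided comparison of $\bs_{-cK}^{1/c}$ with $\tau^{1/c}$: a cruder exponential bound on the numerator would spoil the exponent of $s$ in $C(2s,s)$ and hence the final scaling.
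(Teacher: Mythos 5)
Your proof is correct and is essentially the paper's own argument: both rest on exactly the same two consequences of Theorem \ref{bishop_gromov} --- small-scale doubling (your bound $\int_{B(z,s)}V(x,s)^{-1}\,\d\mu(x)\le C(2s,s)$ is precisely the integrated form of the paper's kernel quasi-symmetry $\chi_s(x,z)\le C\,\chi_s(z,x)$, i.e.\ its $\|f_s\|_1\lesssim\|f\|_1$ step) and the large-scale ratio producing $V^{-1}(r/s)^{1+1/c}$ times an exponential in $\sqrt{K/c}\,r/a$ (the paper's $\|f_s\|_\infty$ step) --- and both interpolate these to reach $L^2$. The only differences are organizational: the paper proves the $L^1$ and $L^\infty$ bounds separately and applies $\|f_s\|_2\le\sqrt{\|f_s\|_\infty\,\|f_s\|_1}$, whereas you fuse them into a single Fubini computation via $\bigl(\int_{B(x,s)}|f|\,\d\mu\bigr)^2\le\|f\|_1\int_{B(x,s)}|f|\,\d\mu$, and your elementary bounds $u\le\sinh u\le u\,\e^{u}$ play the same role as the paper's substitution $t=\frac{rb}{as}\tau$ in estimating the integrals of $\bs_{-cK}^{1/c}$.
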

\begin{proof}
We apply the argument in \cite[Lemma 2.3]{salof1}.
We use the notations in Subsection \ref{epsilon-range-section}. 
For $\tau \geq 0$, $0 < s < r$, we set
\begin{equation*}
t := \frac{r}{a}\frac{b}{s}\tau.
\end{equation*}
Since $\frac{r}{a}\frac{b}{s} \geq 1$, we have $\tau \leq t$.
Hence, by direct computations, we obtain
\begin{equation*}
\bs_{-cK}(t)^{1/c} \leq \bs_{-cK}(\tau)^{1/c}\left(\frac{t}{\tau}\right)^{1/c}\exp\left(\sqrt{\frac{K}{c}}t\right) .
\end{equation*}
Integrating both sides in $t$ from $0$ to $r/a$, we have
\begin{eqnarray*}
\int_0^{r/a}\bs_{-cK}(t)^{1/c} \d t &\leq& \int_0^{r/a}\bs_{-cK}(\tau)^{1/c}\left(\frac{t}{\tau}\right)^{1/c}\exp\left(\sqrt{\frac{K}{c}}t\right) \d t\\
&\leq& \left(\frac{r}{a}\frac{b}{s}\right)^{1/c}\exp\left(\sqrt{\frac{K}{c}}\frac{r}{a}\right)\int_0^{r/a}\bs_{-cK}(\tau)^{1/c}\d t\\
&=& \left(\frac{r}{a}\frac{b}{s}\right)^{1 + \frac{1}{c}}\exp\left(\sqrt{\frac{K}{c}}\frac{r}{a}\right)\int_0^{s/b}\bs_{-cK}(\tau)^{1/c}\d \tau.
\end{eqnarray*}
According to Theorem \ref{bishop_gromov}, we find
\begin{eqnarray*}
\frac{V(r)}{V(s)} &\leq& \frac{b}{a}\frac{\int_0^{r/a}\bs_{-cK}(\tau)^{1/c} \d \tau}{\int_0^{s/b}\bs_{-cK}(\tau)^{1/c}\d \tau}\\
&\leq & \left(\frac{b}{a}\right)^{2 + \frac{1}{c}}\left(\frac{r}{s}\right)^{1 + \frac{1}{c}}\exp\left(\sqrt{\frac{K}{c}}\frac{r}{a}\right).
\end{eqnarray*}
Therefore, we also have the doubling property:
\begin{equation}\label{volume-doubling-property}
V(2r)\leq V(r)\left(\frac{b}{a}\right)^{2 + \frac{1}{c}}2^{\frac{1}{c}+ 1}\exp\left(\sqrt{\frac{K}{c}}\frac{2r}{a}\right).
\end{equation}
For $x,z\in M$ satisfying $d(z,x) < s$, we have
\begin{eqnarray*}
V(z,s)&\leq& V(x,2s)\\
&\leq & V(x,s)\left(\frac{b}{a}\right)^{2 + \frac{1}{c}}2^{\frac{1}{c}+ 1}\exp\left(\sqrt{\frac{K}{c}}\frac{2s}{a}\right).
\end{eqnarray*}
This implies
\begin{equation*}
\chi_s(x,z)\leq \left(\frac{b}{a}\right)^{2 + \frac{1}{c}}2^{\frac{1}{c}+ 1}\exp\left(\sqrt{\frac{K}{c}}\frac{2s}{a}\right) \chi_s(z,x).
\end{equation*}
Thus,
\begin{equation}\label{estimate_1}
\|f_s\|_1 \leq \left(\frac{b}{a}\right)^{2 + \frac{1}{c}}2^{\frac{1}{c}+ 1}\exp\left(\sqrt{\frac{K}{c}}\frac{2s}{a}\right)\|f\|_1.
\end{equation}
We moreover assume $B\cap B(x,s)\neq\emptyset$.
Since
\begin{equation*}
\frac{V(x,2r + s)}{V(x,s)}\leq \left(\frac{b}{a}\right)^{2 + \frac{1}{c}}\left(\frac{2r + s}{s}\right)^{1 + \frac{1}{c}}\exp\left(\sqrt{\frac{K}{c}}\frac{2r + s}{a}\right)
\end{equation*}
and
\begin{equation*}
\frac{V(x,4r)}{V(x,2r + s)}\leq \left(\frac{b}{a}\right)^{2 + \frac{1}{c}}\left(\frac{4r}{2r + s}\right)^{1 + \frac{1}{c}}\exp\left({\sqrt{\frac{K}{c}}}\frac{4r}{a}\right),
\end{equation*}
we have
\begin{eqnarray*}
\frac{1}{V(x,s)}&\leq& \left(\frac{b}{a}\right)^{2 + \frac{1}{c}}\left(\frac{2r + s}{s}\right)^{1 + \frac{1}{c}}\exp\left(\sqrt{\frac{K}{c}}\frac{2r + s}{a}\right)\frac{1}{V(x,2r + s)}\\
&\leq & \left(\frac{b}{a}\right)^{2\left(2 + \frac{1}{c}\right)}\left(\frac{2r + s}{s}\right)^{1 + \frac{1}{c}}\exp\left(\sqrt{\frac{K}{c}}\frac{2r + s}{a}\right)\left(\frac{4r}{2r + s}\right)^{1 + \frac{1}{c}}\exp\left(\sqrt{\frac{K}{c}}\frac{4r}{a}\right)\frac{1}{V(x,4r)}\\
&\leq& \left(\frac{b}{a}\right)^{2\left(2 + \frac{1}{c}\right)}\left(\frac{4r}{s}\right)^{1 + \frac{1}{c}}\exp\left(\sqrt{\frac{K}{c}}\frac{6r + s}{a}\right)\frac{1}{V(y,r)}.
\end{eqnarray*}
Hence,
\begin{eqnarray*}
\|f_s\|_{\infty} &=& \left\|\int\chi_s(x,z)f(z)\d\mu(z)\right\|_{\infty}\\
&\leq& \left(\frac{b}{a}\right)^{2\left(2 + \frac{1}{c}\right)}\left(\frac{4r}{s}\right)^{1 + \frac{1}{c}}\exp\left(\sqrt{\frac{K}{c}}\frac{6r + s}{a}\right)\frac{\|f\|_1}{V(y,r)}.
\end{eqnarray*}
Using (\ref{estimate_1}), we have
\begin{eqnarray*}
\|f_s\|_2 &=& \left(\int f_s^2\d\mu\right)^{\frac{1}{2}}\\
&\leq & \sqrt{\|f_s\|_{\infty}}\sqrt{\|f_s\|_1}\\
&\leq& \left(\frac{b}{a}\right)^{2 + \frac{1}{c}}\left(\frac{4r}{s}\right)^{\frac{1}{2}\left(1 + \frac{1}{c}\right)}\exp\left(\sqrt{\frac{K}{c}}\frac{6r + s}{2a}\right)\frac{1}{\sqrt{V(y,r)}}\left(\frac{b}{a}\right)^{\frac{1}{2}\left(2 + \frac{1}{c}\right)}2^{\frac{1}{2}\left(\frac{1}{c}+ 1\right)}\exp\left(\sqrt{\frac{K}{c}}\frac{s}{a}\right)\|f\|_1\\
&=& \left(\frac{b}{a}\right)^{3 + \frac{3}{2c}}2^{\frac{1}{2}\left(1 + \frac{1}{c}\right)}\left(\frac{4r}{s}\right)^{\frac{1}{2}\left(1 + \frac{1}{c}\right)}\exp\left(\sqrt{\frac{K}{c}}\frac{6r+3s}{2a}\right)\frac{1}{\sqrt{V(y,r)}}\|f\|_1.
\end{eqnarray*}
Setting $C_5 = \left(\frac{b}{a}\right)^{3 + \frac{3}{2c}}2^{\frac{1}{2}\left(1 + \frac{1}{c}\right)}4^{\frac{1}{2}\left(1 + \frac{1}{c}\right)}\exp\left(\sqrt{\frac{K}{c}}\frac{9r}{2a}\right)$, we get the desired inequality.
\end{proof}
 
\begin{lemma}\label{lemma_2}
We fix a constant $r > 0$.
Under the same assumptions as in Theorem \ref{sobolev_thm},
there exists $C_6$ depending only on $c,a,b,r,K,n$ such that
\begin{equation*}
\|f - f_s\|_2\leq C_6s \|\nabla f\|_2,\quad f\in C_0^{\infty}(M)
\end{equation*}
for all $0 < s < r$.
\end{lemma}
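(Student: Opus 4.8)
The plan is to follow the scheme of Saloff-Coste, estimating $\|f-f_s\|_2$ by integrating $\nabla f$ along minimal geodesics, but organizing the argument so as to avoid the degeneracy of the exponential map at the base point. First I would apply Jensen's inequality to the averaging operator: since $\int\chi_s(x,z)\,\d\mu(z)=1$, we have $f(x)-f_s(x)=\int\chi_s(x,z)(f(x)-f(z))\,\d\mu(z)$, and hence $|f(x)-f_s(x)|^2\le V(x,s)^{-1}\int_{B(x,s)}|f(x)-f(z)|^2\,\d\mu(z)$. Along the rescaled geodesic $l_{x,z}$ from $x$ to $z$, Cauchy-Schwarz gives $|f(x)-f(z)|^2\le d(x,z)^2\int_0^1|\nabla f(l_{x,z}(t))|^2\,\d t$, and since $d(x,z)<s$ this bounds $\|f-f_s\|_2^2$ by
\[ \mathcal I := \int_M\frac{1}{V(x,s)}\int_{B(x,s)}\int_0^1 |\nabla f(l_{x,z}(t))|^2\,d(x,z)^2\,\d t\,\d\mu(z)\,\d\mu(x). \]

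The main obstacle is that a naive radial integration of $|\nabla f|^2$ produces, after swapping the order of integration and applying the Jacobian comparison \eqref{non_increasing_property}, a weight of order $d(x,z)^{-1/c}$; against $\d\mu$ this is integrable near the base point only when $1/c<n$, which fails in the $\eps$-range precisely when $c$ is small. To circumvent this I would split $\mathcal I=\mathcal I_1+\mathcal I_2$ according to $t\in[0,1/2]$ and $t\in[1/2,1]$, so that in $\mathcal I_2$ the point $l_{x,z}(t)$ stays in the far half of the geodesic, away from $x$. For fixed $x$ and $t\in[1/2,1]$ the map $\Phi_{x,t}:z\mapsto l_{x,z}(t)$ is exactly the one analyzed in the proof of Theorem \ref{poincare_thm}, whose Jacobian obeys the uniform lower bound \eqref{moto-lem-5-6-7}, namely $J_{x,t}\ge 1/F(r)$. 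Changing variables $z\mapsto y=l_{x,z}(t)$ (whose image lies in $B(x,s)$) therefore costs only the constant $F(r)$; bounding $d(x,z)^2\le s^2$, integrating in $t$, and exchanging the $x$- and $y$-integrations, I would then estimate $\int_{B(y,s)}V(x,s)^{-1}\,\d\mu(x)\le C$ via the volume doubling property \eqref{volume-doubling-property} (for $d(x,y)<s<r$ one has $V(y,s)\le V(x,2s)\le C\,V(x,s)$). This yields $\mathcal I_2\le C\,s^2\|\nabla f\|_2^2$.

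For the near half $\mathcal I_1$ I would use the reflection $l_{x,z}(t)=l_{z,x}(1-t)$ to convert the range $t\in[0,1/2]$ into $t\in[1/2,1]$ based at $z$, and replace the weight $V(x,s)^{-1}$ by $V(z,s)^{-1}$ at the cost of the same doubling constant (again valid since $d(x,z)<s$). After relabelling $x\leftrightarrow z$ this is precisely the integral $\mathcal I_2$, so $\mathcal I_1\le C\,\mathcal I_2$ and hence $\mathcal I\le C\,s^2\|\nabla f\|_2^2$. Taking square roots gives the claim, with $C_6$ built from $F(r)$ and the doubling constant in \eqref{volume-doubling-property}; both depend only on $c,a,b,r,K,n$ (using $s<r$ to make the exponential factors uniform), as required. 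The crux, and the reason the far/near splitting is forced rather than cosmetic, is exactly the non-integrability of $d(x,z)^{-1/c}$ at the base point when $1/c\ge n$: the splitting keeps the relevant Jacobian bounded below by a constant and transfers all of the volume normalization onto the doubling property.
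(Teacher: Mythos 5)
Your argument is correct, but it takes a genuinely different route from the paper. The paper proves Lemma \ref{lemma_2} by a covering argument in the style of Saloff-Coste: it takes disjoint balls $B_j$ of radius $s/2$ with $M=\bigcup 2B_j$, bounds the overlap multiplicity of the $8B_j$ by a constant $N_0$ via the doubling property \eqref{volume-doubling-property}, applies the already-established local Poincar\'{e} inequality of Theorem \ref{poincare_thm} on each ball $4B_i$, controls the cross term $\int_{2B_i}|f_{4B_i}-f_s|^2\d\mu$ by Jensen plus doubling, and sums using the bounded overlap. You instead prove the pseudo-Poincar\'{e} inequality directly: Jensen for the averaging kernel $\chi_s$, geodesic integration, the far/near splitting in $t$, the Jacobian lower bound \eqref{moto-lem-5-6-7} at scale $s$ (valid with the same $F(r)$ since $d(x,z)<s<r$), and doubling both to exchange $V(x,s)^{-1}$ for $V(z,s)^{-1}$ in the near-half term and to bound $\int_{B(y,s)}V(x,s)^{-1}\d\mu(x)$ after Fubini. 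In effect you re-open the proof of Theorem \ref{poincare_thm} and run it with the ball average $f_B$ replaced by the mollification $f_s$, rather than invoking its statement. What each approach buys: yours avoids the covering/overlap machinery entirely, yields a cleaner constant (essentially $C_d^2F(r)$ rather than the eighth power $N_0=C_7^8$ appearing in the paper), and your diagnosis of why the $t\in[1/2,1]$ restriction is forced --- the weight $d(x,z)^{-1/c}$ is non-integrable at the base point when $1/c\ge n$, which does occur in the $\eps$-range as $c\to 0^+$ --- is exactly right; the paper's route is more modular, using only the statements of Theorem \ref{poincare_thm} and \eqref{volume-doubling-property} as black boxes, which is the standard ``doubling $+$ Poincar\'{e} $\Rightarrow$ pseudo-Poincar\'{e}'' implication and is therefore more robust if one later replaces the proof of the Poincar\'{e} inequality. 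Two points in your write-up deserve the care you implicitly gave them: the change of variables $z\mapsto\Phi_{x,t}(z)$ must be performed away from the cut locus of $x$ (a measure-zero set, exactly as in the paper's proof of Theorem \ref{poincare_thm}), and all constants must be taken uniform over $0<s<r$, which holds because every exponential factor is increasing in $s$ and can be evaluated at $r$.
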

\begin{proof}
We apply the argument in \cite[Lemma 2.4]{salof1}.
Fix $a > 0$, let $\{B_j:\ j\in J\}$ be a collection of balls of radius $s/2$ such that $B_i\cap B_j = \emptyset$ if $i\neq j$ and $M = \bigcup_{i\in J}2B_i$. 
For $z\in M$, let $J(z) = \{i\in J:\ z\in 8B_i\}$ and $N(z) = \#J(z)$. 
We first estimate $N(z)$ from above.
Let $B_z$ be a ball in $\{B_j:\ j\in J\}$ such that $z\in 2B_z$.
For $i\in J(z)$, we have $B_z \subset 16 B_i$. Hence,
\begin{equation*}
\mu(B_z)\leq \mu(16B_i) \leq C_7^4\mu(B_i),
\end{equation*}
where
\begin{equation*}
C_7 := \left(\frac{b}{a}\right)^{2 + \frac{1}{c}}2^{\frac{1}{c}+ 1}\exp\left(\sqrt{\frac{K}{c}}\frac{8r}{a}\right) \geq \left(\frac{b}{a}\right)^{2 + \frac{1}{c}}2^{\frac{1}{c}+ 1}\exp\left(\sqrt{\frac{K}{c}}\frac{8s}{a}\right).
\end{equation*}
Therefore, we have
\begin{equation*}
\sum_{i\in J(z)}\mu(B_i) \geq N(z)\frac{\mu(B_z)}{C_7^4}.
\end{equation*}
On the other hand, for $i\in \{j\in J:\ z\in 8B_j\}$, we have $B_i \subset 16 B_z$. Hence,
\begin{equation*}
\sum_{i\in J(z)}\mu(B_i) \leq \mu(16 B_z) \leq C_7^4\mu(B_z).
\end{equation*}
Therefore, we find
\begin{equation*}
N(z)\frac{\mu(B_z)}{C_7^4} \leq C_7^4\mu(B_z).
\end{equation*}
Letting $N_0 := C_7^8$, we have $N(z)\leq N_0$.
We now estimate $\|f - f_s\|_2$. Note that
\begin{equation}\label{star_3}
\|f - f_s\|_2^2\leq \sum_{i\in J}\left(2\int_{2B_i}|f(x) - f_{4B_i}|^2 + |f_{4B_i} - f_s(x)|^2\ \d\mu(x)\right).
\end{equation}
By the Poincar\'{e} inequality (\ref{poincare_inequality}), we have
\begin{equation}\label{estimate_lem3_1}
\int_{4B_i} |f(x) - f_{4B_i}|^2 \d\mu(x) \leq 2^{n+3}\left(\frac{2b}{a}\right)^{\frac{1}{c}}\exp\left(\sqrt{\frac{K}{c}}\frac{4s}{a}\right)(2s)^2\int_{8B_i}|\nabla f|^2 \d\mu \leq C_8s^2\int_{8B_i}|\nabla f|^2 \d\mu,
\end{equation}
where
\begin{equation*}
C_8 = 2^{n+5} \left(\frac{2b}{a}\right)^{\frac{1}{c}}\exp\left(\sqrt{\frac{K}{c}}\frac{4r}{a}\right).
\end{equation*}
Since for any $ x \in 2B_i = B(x_i,s),$
\begin{equation*}
V(x_i,s)\leq V(x,2s) \leq V(x,s)\left(\frac{b}{a}\right)^{2 + \frac{1}{c}}2^{\frac{1}{c}+ 1}\exp\left(\sqrt{\frac{K}{c}}\frac{2s}{a}\right),
\end{equation*}
we have
\begin{eqnarray}
\int_{2B_i}\left|f_{4B_i} - f_s(x)\right|^2 \d\mu(x) 
&=& \int_{2B_i}\left|\int_{B(x,s)}\frac{1}{V(x,s)}\{f_{4B_i} - f(z)\}\ \d\mu(z)\right|^2\d\mu(x)\nonumber\\
&\leq & \int_{2B_i}\int_{B(x,s)} \frac{1}{V(x,s)} \left|f_{4B_i} - f(z) \right|^2\d\mu(z) \ \d\mu(x)\nonumber\\
&\leq&\frac{1}{V(x_i,s)}\left(\frac{b}{a}\right)^{2 + \frac{1}{c}}2^{\frac{1}{c}+ 1}\exp\left(\sqrt{\frac{K}{c}}\frac{2s}{a}\right) \int_{2B_i}\int_{4B_i}|f_{4B_i} - f(z)|^2\d\mu(z)\ \d\mu(x)\nonumber\\
&\leq & \left(\frac{b}{a}\right)^{2 + \frac{1}{c}}2^{\frac{1}{c}+ 1}\exp\left(\sqrt{\frac{K}{c}}\frac{2r}{a}\right)C_8s^2\int_{8B_i}|\nabla f|^2\d\mu.\label{star_4}
\end{eqnarray}
Using (\ref{star_3}), (\ref{estimate_lem3_1}), (\ref{star_4}), we have
\begin{equation*}
\|f - f_s\|^2_2\leq C_{9}s^2\sum_{i\in J}\int_{8B_i}|\nabla f|^2 \d\mu \leq C_{9}N_0s^2\|\nabla f\|^2_2,
\end{equation*}
where
\begin{equation*}
C_{9} = 4\left(\frac{b}{a}\right)^{2 + \frac{1}{c}}2^{\frac{1}{c}+ 1}\exp\left(\sqrt{\frac{K}{c}}\frac{2r}{a}\right)C_8 \geq 2\left\{\left(\frac{b}{a}\right)^{2 + \frac{1}{c}}2^{\frac{1}{c}+ 1}\exp\left(\sqrt{\frac{K}{c}}\frac{2r}{a}\right)C_8 + C_8\right\}.
\end{equation*}
Therefore, setting
\begin{eqnarray*}
C_6 &:=&\sqrt{N_0C_{9}} \\
&=& \left\{\left(\frac{b}{a}\right)^{2 + \frac{1}{c}}2^{\frac{1}{c}+ 1}\exp\left(\sqrt{\frac{K}{c}}\frac{8r}{a}\right) \right\}^4 \times \sqrt{4\left(\frac{b}{a}\right)^{2 + \frac{1}{c}}2^{\frac{1}{c}+ 1}\exp\left(\sqrt{\frac{K}{c}}\frac{2r}{a}\right)}\times \sqrt{2^{n + 5} \left(\frac{2b}{a}\right)^{\frac{1}{c}}\exp\left(\sqrt{\frac{K}{c}}\frac{4r}{a}\right)}\\
&=& 2^{8 + \frac{5}{c} + \frac{n}{2}}\left(\frac{b}{a}\right)^{9 + \frac{5}{c}}\exp\left(\sqrt{\frac{K}{c}}\frac{35r}{a}\right),
\end{eqnarray*}
we have the desired inequality.
\end{proof}
 
\underline{\textit{Proof of Theorem \ref{sobolev_thm}}}
 
We apply the argument in \cite[Theorem 2.1]{salof1}. Fix $x\in M, r > 0$. For $0 < s \leq r$ and $f\in C_0^{\infty}(B(x,r))$, we have
\begin{equation*}
\|f\|_2\leq \|f - f_s\|_2 + \|f_s\|_2.
\end{equation*}
It follows from Lemmas \ref{lemma_1}, \ref{lemma_2} that
\begin{equation*}
\|f\|_2\leq C_6 s\|\nabla f\|_2 + C_5V^{-\frac{1}{2}}\left(\frac{r}{s}\right)^{\frac{\nu}{2}}\|f\|_1,
\end{equation*}
where $\nu = 1 + \frac{1}{c}$.
Hence, we obtain
\begin{equation}\label{robinson}
\|f\|_2\leq 4C_6s\left(\|\nabla f\|_2 + \frac{1}{r}\|f\|_2\right) + C_5V^{-\frac{1}{2}}\left(\frac{r}{s}\right)^{\frac{\nu}{2}}\|f\|_1.
\end{equation}
To obtain the minimum of the RHS of (\ref{robinson}), we consider its differential with respect to $s > 0$. At $s > 0$ which attains the minimum, we have
\begin{equation*}
4C_6\left(\|\nabla f\|_2 + \frac{1}{r}\|f\|_2\right) + C_5V^{-\frac{1}{2}}r^{\frac{\nu}{2}}\left(-\frac{\nu}{2}\right)s^{-\frac{\nu}{2}-1}\|f\|_1 = 0.
\end{equation*}
Thus,
\begin{equation}\label{value_of_s}
s^{\frac{\nu}{2} + 1} = C_{10}\frac{V^{-\frac{1}{2}}r^{\frac{\nu}{2}}\|f\|_1}{\|\nabla f\|_2 + \frac{1}{r}\|f\|_2},
\end{equation}
where $C_{10} = \frac{\nu}{2}\frac{C_5}{4C_6}$.
Substituting (\ref{value_of_s}) to the RHS of (\ref{robinson}), we obtain
\begin{eqnarray*}
\|f\|_2&\leq&4C_6\left\{C_{10}\frac{V^{-\frac{1}{2}}r^{\frac{\nu}{2}}\|f\|_1}{\|\nabla f\|_2 + \frac{1}{r}\|f\|_2}\right\}^{\frac{2}{2 + \nu}}\left(\|\nabla f\|_2 + \frac{1}{r}\|f\|_2\right) + C_5V^{-\frac{1}{2}}r^{\frac{\nu}{2}}\left\{\left(C_{10}\frac{V^{-\frac{1}{2}}r^{\frac{\nu}{2}}\|f\|_1}{\|\nabla f\|_2 + \frac{1}{r}\|f\|_2}\right)^{\frac{2}{2 + \nu}}\right\}^{-\frac{\nu}{2}}\|f\|_1\\
&=&4C_6C_{10}^{\frac{2}{2 + \nu}}\left(\|\nabla f\|_2 + \frac{1}{r}\|f\|_2\right)^{-\frac{2}{2 + \nu} + 1}\left(V^{-\frac{1}{2}}r^{\frac{\nu}{2}}\|f\|_1\right)^{\frac{2}{2 + \nu}} \\
&&\qquad + C_5C_{10}^{\frac{-\nu}{2 + \nu}}\left(\|\nabla f\|_2 + \frac{1}{r}\|f\|_2\right)^{\frac{\nu}{2 + \nu}}V^{-\frac{1}{2}}r^{\frac{\nu}{2}}\left(V^{-\frac{1}{2}}r^{\frac{\nu}{2}}\|f\|_1\right)^{-\frac{\nu}{2 + \nu}}\|f\|_1\\
&=& \left(\|\nabla f\|_2 + \frac{1}{r}\|f\|_2\right)^{\frac{\nu}{2 + \nu}}\left\{4C_6C_{10}^{\frac{2}{2 + \nu}}V^{-\frac{1}{2}\left(\frac{2}{2 + \nu}\right)}r^{\frac{\nu}{2}\left(\frac{2}{2 + \nu}\right)}\|f\|_1^{\frac{2}{2 + \nu}} \right.\\
&&\qquad \left.+ C_5C_{10}^{\frac{-\nu}{2 + \nu}}V^{-\frac{1}{2}\left(1-\frac{\nu}{2 + \nu}\right)}r^{\frac{\nu}{2}\left(1 - \frac{\nu}{2 + \nu}\right)}\|f\|_1^{1 - \frac{\nu}{2 + \nu}}\right\}\\
&=& \left\{4C_6C_{10}^{\frac{2}{2 + \nu}} + C_5C_{10}^{\frac{-\nu}{2 + \nu}}\right\}\left(\|\nabla f\|_2 + \frac{1}{r}\|f\|_2\right)^{\frac{\nu}{2 + \nu}}V^{-\frac{1}{2}\left(\frac{2}{2 + \nu}\right)}r^{\frac{\nu}{2}\left(\frac{2}{2 + \nu}\right)}\|f\|_1^{\frac{2}{2 + \nu}}.
\end{eqnarray*}
Hence
\begin{equation*}
\|f\|_2^{2 + \frac{4}{\nu}}\leq \left\{4C_6C_{10}^{\frac{2}{2 + \nu}} + C_5C_{10}^{\frac{-\nu}{2 + \nu}}\right\}^{2 + \frac{4}{\nu}} V^{-\frac{2}{\nu}}r^2 \left\{2\left(\|\nabla f\|_2^2 + \frac{\|f\|^2_2}{r^2}\right)\right\}\|f\|_1^{\frac{4}{\nu}}.
\end{equation*}
Recalling the expressions of $C_5$ and $C_6$ in the proofs of Lemmas \ref{lemma_1}, \ref{lemma_2},
we choose constants $E_1, E_2$ depending on $c,a,b,n$ such that
\begin{equation*}
C_6C_{10}^{\frac{2}{2 + \nu}} = E_1\exp\left(\sqrt{\frac{K}{c}}\left(\frac{35r}{a} - \frac{61r}{2a}\frac{2}{2 + \nu}\right)\right)
\end{equation*}
and
\begin{equation*}
C_5C_{10}^{\frac{-\nu}{2 + \nu}} = E_2\exp\left(\sqrt{\frac{K}{c}}\left(\frac{9r}{2a} - \frac{61r}{2a}\frac{-\nu}{2 + \nu}\right)\right).
\end{equation*}
Thus, there exist constants $D,E_3$ such that
\begin{equation*}
\left\{4C_6C_{10}^{\frac{2}{2 + \nu}} + C_5C_{10}^{\frac{-\nu}{2 + \nu}}\right\}^{2 + \frac{4}{\nu}} < E_3\exp\left(D\left(1 + \sqrt{\frac{K}{c}}\right)\frac{r}{a}\right).
\end{equation*}
We remark that $D,E_3$ depend only on $c,a,b,n$.
Since
$c \leq \frac{1}{n-1}$,
we have $\nu = 1 + \frac{1}{c} \geq n > 2$ when $n \geq 3$. Hence, we can use Theorem \ref{proof_ingredients_analysis} and Theorem \ref{sobolev_thm} follows.
\qed
\begin{remark}\label{last}
At the end of the proof of Theorem \ref{sobolev_thm}, we use the fact that $1 + \frac{1}{c} > 2$, it is the only reason why we need the assumption $n\geq 3$.
In the case of $n = 2$, we have the local Sobolev inequality when $\eps\neq 0$ under the same curvature bound and (\ref{measure_pinching}) in Theorem \ref{sobolev_thm}.
\end{remark}
 
\begin{remark}
One of the possible subjects of further research is the gradient estimate of eigenfunctions of the weighted Laplacian with $\eps$-range, which turned out to be difficult. If $\Ric_\psi^m$ is bounded from below with $m > n$, then one way to obtain the gradient estimate is to apply the Li-Yau trick as described in \cite{wu}, \cite{wu2} and another way is to use the DeGiorgi-Nash-Moser theory \cite{li3} as described in \cite{soliton}. 
Once we obtained the gradient estimate by the Li-Yau trick, an upper bound of eigenvalues of the weighted Laplacian is obtained as in \cite{wu}, \cite{wu2}.
However, it seems that the Li-Yau trick and Moser's iteration argument in \cite{linfengwang2} do not work well in the case where $\Ric_\psi^m$ is bounded from below with $m \leq 1$. The main difficulty stems from the lack of a suitable Bochner formula for analyzing lower Bakry-\'{E}mery-Ricci curvature bounds with $\eps$-range. Although \cite[Lemma 2.1]{kuwae_sakurai} obtained the Bochner formula for the distance function with $\eps$-range, a suitable Bochner formula for eigenfunctions of the weighted Laplacian is yet to be known.
Finding a suitable Bochner formula for eigenfunctions is our future work.
 
\end{remark}

\section*{Appendix: Upper bound of the $L^p$-spectrum for deformed measures}
Although we considered the Riemannian distance $d$, it is also possible to study comparison theorems associated with a metric deformed by using the weight function
(we refer \cite{yoroshikin}, \cite{kuwae_li}, \cite{kuwae_sakurai}, for example).
In this appendix, we start from a volume comparison theorem in \cite{kuwae_sakurai} and prove a variant of Cheng type inequality for the $L^p$-spectrum.
 
Let $(M,g,\mu = \e^{-\psi}v_g)$ be an $n$-dimensional weighted Riemannian manifold, $m\in (-\infty,1]\cup [n,+\infty]$ and $\eps \in \mathbb{R}$ in the range (\ref{epsilin-range}).
We fix a point $q\in M$. 
We define lower semi continuous functions $s_q:M\to \mathbb{R}$ by
\begin{equation*}
\quad s_q(x):=\inf_{\gamma}\int^{d(q,x)}_0\,\e^{ -\frac{2(1-\eps)\psi(\gamma(\xi))}{n-1} }\,\d \xi,
\end{equation*}
where the infimum is taken over all unit speed minimal geodesics $\gamma:[0,d(q,x)]\to M$ from $q$ to $x$.
For $r>0$,
we define
\begin{equation*}
B_{\psi,q}(r):=\left\{\,x\in M \mid s_q(x) < r \,\right\},
\end{equation*}
and also define measures
\begin{equation*}
\mu:=\e^{-\psi}\,v_g,\quad \nu:=\e^{ - \frac{2(1-\eps)\psi}{n-1} }\mu.
\end{equation*}
We set
\begin{equation*}
\mathcal{S}_{-K}(r):=\int^{r}_{0}\,\bs^{1/c}_{-K}(s)\,\d s
\end{equation*}
for $K > 0$.
In \cite{kuwae_sakurai}, they obtained the following theorem.
\begin{theorem}(\cite[Proposition 4.6]{kuwae_sakurai},Volume comparison)\label{deformed-bishop-gromov}
   Let $(M,g,\mu)$ be an $n$-dimensional weighted Riemannian manifold.
We assume $\Ric^{m}_{\psi} \geq -K\e^{ \frac{4(\eps-1)\psi}{n-1} }g$ for $K > 0$.
Then for all $r,R>0$ with $r\leq R$
we have
\begin{equation*}
\frac{\nu(B_{\psi,q}(R))}{\nu(B_{\psi,q}(r))} \leq \frac{\mathcal{S}_{-cK}(R)}{\mathcal{S}_{-cK}(r)}.
\end{equation*}
\end{theorem}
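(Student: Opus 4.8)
The plan is to recast the statement as a one-dimensional Bishop--Gromov monotonicity by passing to geodesic polar coordinates at $q$ and then reparametrizing the radial variable by $\varphi_\eta$. This substitution is exactly what turns the deformed distance $s_q$ into an honest radial coordinate and the measure $\nu$ into a $\bs_{-cK}$-comparable radial density. The key input is the monotonicity \eqref{non_increasing_property} already recorded in Subsection \ref{epsilon-range-section}, applied with the constant $K$ replaced by $-K$.

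First I would fix a unit vector $\xi \in T_qM$, write $\eta = \eta_\xi$ for the geodesic it generates, and recall from the proof of Theorem \ref{poincare_thm} that in polar coordinates $\d\mu = \e^{-\psi(\eta(t))} h_0^{n-1}(t)\, \d t\, \d\xi$ up to the cut locus. Since $\nu = \e^{\frac{2(\eps-1)}{n-1}\psi}\mu$ and $\varphi_\eta'(t) = \e^{\frac{2(\eps-1)}{n-1}\psi(\eta(t))}$, the substitution $\tau = \varphi_\eta(t)$ gives
\[
\d\nu = \e^{-\psi(\eta(t))} h_0^{n-1}(t)\, \d\tau\, \d\xi = h_1(\tau)^{1/c}\, \d\tau\, \d\xi,
\]
using the identity $(\e^{-\psi(\eta)}h_0^{n-1})(t) = h_1(\varphi_\eta(t))^{1/c}$ from Subsection \ref{epsilon-range-section}. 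Moreover, for $x = \eta(t)$ outside the cut locus one has $s_q(x) = \varphi_\eta(t) = \tau$, so $B_{\psi,q}(r)$ corresponds exactly to $\{\tau < r\}$ and
\[
\nu(B_{\psi,q}(r)) = \int_{S_qM} \int_0^{\min\{r,\,\tau_{\mathrm{cut}}(\xi)\}} h_1(\tau,\xi)^{1/c}\, \d\tau\, \d\xi,
\]
where $\tau_{\mathrm{cut}}(\xi) = \varphi_\eta(t_{\mathrm{cut}}(\xi))$ and $S_qM$ is the unit sphere with its canonical measure $\d\xi$.

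Next, the curvature hypothesis feeds into \eqref{non_increasing_property} with $-K$ in place of $K$, so that $h_1(\tau,\xi)^{1/c}/\bs_{-cK}(\tau)^{1/c}$ is non-increasing in $\tau$ for each $\xi$. Setting $f_\xi(\tau) := h_1(\tau,\xi)^{1/c}\mathbf{1}_{\{\tau < \tau_{\mathrm{cut}}(\xi)\}}$, the standard lemma stating that $f/\phi$ non-increasing forces $\int_0^r f \big/ \int_0^r \phi$ to be non-increasing (here $\phi = \bs_{-cK}^{1/c}$, whose primitive is $\mathcal{S}_{-cK}$) yields that $r \mapsto \big(\int_0^r f_\xi(\tau)\,\d\tau\big)/\mathcal{S}_{-cK}(r)$ is non-increasing for every $\xi$. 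Integrating this pointwise inequality over $S_qM$ and using the polar formula above shows that $r \mapsto \nu(B_{\psi,q}(r))/\mathcal{S}_{-cK}(r)$ is non-increasing, which is precisely the claimed inequality for $r \le R$.

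The step I expect to require the most care is the treatment of the cut locus. One must justify that extending $f_\xi$ by zero past $\tau_{\mathrm{cut}}(\xi)$ preserves the monotonicity of the ratio $f_\xi/\bs_{-cK}^{1/c}$ (it does, since the ratio is non-increasing while positive and then drops to $0$), and that the set where $\eta$ ceases to be minimizing carries no $\nu$-mass, so that the polar integral genuinely computes $\nu(B_{\psi,q}(r))$. A secondary technical point is that $s_q$ is only lower semicontinuous in general, so $B_{\psi,q}(r)$ need not be open along the cut locus; but since that exceptional set is $\nu$-null, it does not affect the volume computation.
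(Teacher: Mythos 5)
The paper does not actually prove this theorem: it is imported verbatim from \cite[Proposition 4.6]{kuwae_sakurai}, so there is no internal proof to compare against. Your reconstruction is correct: in polar coordinates at $q$ the substitution $\tau=\varphi_\eta(t)$ does convert $\d\nu$ into the radial density $h_1(\tau,\xi)^{1/c}\,\d\tau\,\d\xi$ and identifies $s_q$ with $\tau$ off the cut locus; the hypothesis gives, via \eqref{non_increasing_property} with $K$ replaced by $-K$, that $h_1(\tau,\xi)^{1/c}/\bs_{-cK}(\tau)^{1/c}$ is non-increasing; extending the density by zero past $\tau_{\mathrm{cut}}(\xi)$ preserves the monotone-ratio property since the density is nonnegative; and the Gromov-type ratio lemma together with the $\nu$-nullity of the cut locus (which has zero Riemannian volume, and $\nu$ has smooth positive density with respect to $v_g$) yields the stated inequality after integrating over $S_qM$. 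This is essentially the same mechanism used in the cited source, built from exactly the toolkit the paper records in Subsection \ref{epsilon-range-section}, so your argument can be regarded as a faithful self-contained proof of the quoted result.
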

 
In the following argument, we start from Theorem \ref{deformed-bishop-gromov} instead of Theorem \ref{bishop_gromov} to prove a Cheng type inequality of the $L^p$-spectrum for the deformed measure $\nu$.
\begin{theorem}\label{appendix_thm_1}
   Let $(M,g,\mu)$ be a complete weighted Riemannian manifold.
   We assume that $s_q$ is smooth and there exists a constant $k > 0$ such that
   \begin{equation*}
       |\nabla s_q(x)|\leq k
   \end{equation*}
   holds for arbitrary $x\in M$. We also assume
   \begin{equation*}
       \Ric^{m}_{\psi} \geq -K\,\e^{ \frac{4(\eps-1)\psi}{n-1} }g
       \end{equation*}
       for $K > 0$. Then we have
       \begin{equation}\label{cheng-type-inequality-nu-1}
       \lambda_{\nu,p}(M) \leq \left(\frac{k}{p}\sqrt{\frac{K}{c}}\right)^p.
   \end{equation}
\end{theorem}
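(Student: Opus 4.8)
The plan is to mirror the proof of Theorem \ref{cheng-type-epsilon-range} step by step, replacing the Riemannian distance $r=d(q,\cdot)$ by the deformed distance $s_q$, the reference measure $\mu$ by $\nu$, and Theorem \ref{bishop_gromov} by its deformed counterpart Theorem \ref{deformed-bishop-gromov}. Concretely, for a fixed $\delta>0$ I would set
\[
\alpha := -\frac{1}{p}\left(\sqrt{\frac{K}{c}}+\delta\right)<0,
\]
pick a smooth $\eta:[0,\infty)\to[0,1]$ with $\eta\equiv 1$ on $[0,R-1]$, $\eta\equiv 0$ on $[R,\infty)$ and $|\eta'|\le C_3$, and take as test function $\phi:=\e^{\alpha s_q}\varphi$ with $\varphi:=\eta\circ s_q$ for $R\ge 2$. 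Since $s_q$ is assumed smooth, $\phi$ is smooth and supported in $\overline{B_{\psi,q}(R)}$, and $|\nabla\varphi|=|\eta'(s_q)|\,|\nabla s_q|\le C_3 k$.

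First I would estimate the gradient. Using $\alpha<0$ and the standing hypothesis $|\nabla s_q|\le k$,
\[
|\nabla\phi|\le \e^{\alpha s_q}\bigl(-\alpha k\,\varphi+|\nabla\varphi|\bigr),
\]
and then the same elementary inequality $(x+y)^p\le(1+\zeta)^{p-1}x^p+\bigl(\tfrac{1+\zeta}{\zeta}\bigr)^{p-1}y^p$ ($x,y\ge0$, $\zeta>0$) used in Theorem \ref{cheng-type-epsilon-range}. Inserting this into the Rayleigh quotient defining $\lambda_{\nu,p}(M)$ and cancelling $\e^{p\alpha s_q}\varphi^p$ in the leading term gives
\[
\lambda_{\nu,p}(M)\le (1+\zeta)^{p-1}(-\alpha k)^p+\Bigl(\tfrac{1+\zeta}{\zeta}\Bigr)^{p-1}\frac{\int_{B_{\psi,q}(R)\setminus B_{\psi,q}(R-1)}\e^{p\alpha s_q}|\nabla\varphi|^p\,\d\nu}{\int_M \e^{p\alpha s_q}\varphi^p\,\d\nu}.
\]
The first summand already carries the factor $k$ that the bound $|\nabla s_q|\le k$ contributes, and is exactly where the final constant originates; it remains to kill the error term.

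Next I would bound the error term. On the deformed annulus $s_q\in[R-1,R]$ one has $\e^{p\alpha s_q}\le \e^{p\alpha(R-1)}$, so the numerator is at most $(C_3k)^p\,\e^{p\alpha(R-1)}\nu(B_{\psi,q}(R))$, while the denominator is at least $\e^{p\alpha}\nu(B_{\psi,q}(1))$ because $\varphi\equiv1$ and $s_q<1$ on $B_{\psi,q}(1)\subset B_{\psi,q}(R-1)$. Theorem \ref{deformed-bishop-gromov} gives $\nu(B_{\psi,q}(R))\le \nu(B_{\psi,q}(1))\,\mathcal{S}_{-cK}(R)/\mathcal{S}_{-cK}(1)$, and the same computation as in Theorem \ref{cheng-type-epsilon-range} yields $\mathcal{S}_{-cK}(R)=\int_0^R\bs_{-cK}(\tau)^{1/c}\,\d\tau\le C\,\e^{\sqrt{K/c}\,R}$. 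Hence the error term is at most a constant independent of $R$ times $\e^{(p\alpha+\sqrt{K/c})R}=\e^{-\delta R}\to 0$ as $R\to\infty$. Letting $R\to\infty$ leaves $\lambda_{\nu,p}(M)\le(1+\zeta)^{p-1}(-\alpha k)^p$, and sending $\zeta\downarrow 0$ and then $\delta\downarrow 0$ produces $\lambda_{\nu,p}(M)\le\bigl(\tfrac{k}{p}\sqrt{K/c}\bigr)^p$, which is \eqref{cheng-type-inequality-nu-1}.

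The computational steps are routine transcriptions of the earlier proof, so the genuinely delicate point is that $\phi=\e^{\alpha s_q}\eta(s_q)$ must lie in $C_0^\infty(M)$, i.e.\ the sublevel set $B_{\psi,q}(R)$ must be precompact so that the cutoff has compact support. Since $|\nabla s_q|\le k$ only gives $s_q\le k\,d(q,\cdot)$, it does not on its own force $s_q$ to be proper, so I expect to have to invoke completeness together with the structure of $s_q$ from \cite{kuwae_sakurai} (or to assume properness of $s_q$) to secure compact support; this is the step requiring the most care. A secondary subtlety is checking that the growth rate of $\mathcal{S}_{-cK}$ is governed by $\sqrt{K/c}$ and not by $k$, so that $k$ enters only through $-\alpha k$ and the final constant is $(k/p)\sqrt{K/c}$ rather than something with $k$ in the exponential rate.
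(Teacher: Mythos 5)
Your proposal follows essentially the same route as the paper's own proof: the same test function $\phi=\e^{\alpha s_q}\eta(s_q)$ with the same choice of $\alpha$, the same elementary inequality for $|\nabla\phi|^p$, the same use of Theorem \ref{deformed-bishop-gromov} together with the exponential growth estimate for $\mathcal{S}_{-cK}$, and the same limits $R\to\infty$, $\zeta\downarrow 0$, $\delta\downarrow 0$. The compact-support subtlety you flag (precompactness of $B_{\psi,q}(R)$, which $|\nabla s_q|\le k$ alone does not guarantee) is a fair observation, but the paper's proof does not address it either: it uses the identical test functions and appeals directly to the definition of $\lambda_{\nu,p}(M)$, so your write-up is in this respect no less complete than the original.
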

\begin{proof}
   We apply the argument in Theorem \ref{cheng-type-epsilon-range}.
 
For $R\geq 2$,
let $\eta:\mathbb{R}\rightarrow \mathbb{R}$ be a nonnegative smooth function such that $\eta = 1$ on $(-(R-1),R-1)$, $\eta = 0$ on $\mathbb{R}\backslash (-R,R)$ and $|\eta '| \leq C_3$, where $C_3$ is a constant independent of $R$.
We set, for an arbitrary $\delta > 0$,
\begin{equation*}
\alpha = -\frac{\sqrt{K/c} + \delta}{p}
\end{equation*}
and
\begin{equation*}
   \phi(y) := \exp(\alpha s_q(y))\varphi(y),
\end{equation*}
where $\varphi(y) := \eta(s_q(y))$.
By the assumption of $s_{q}$, we have
\begin{equation*}
   |\nabla \varphi| = |\eta'(s_q)||\nabla s_q| \leq k C_3.
\end{equation*}
As in the proof of Theorem \ref{cheng-type-epsilon-range}, we find for an arbitrary $\zeta > 0$,
\begin{eqnarray*}
   |\nabla \phi|^p &=&\left|\alpha \e^{\alpha s_q} \varphi \nabla s_q+\e^{\alpha s_q} \nabla \varphi\right|^p \\
   & \leq& \e^{p \alpha s_q}(-k\alpha \varphi+|\nabla \varphi|)^p \\
   & \leq& \e^{p \alpha s_q}\left[(1+\zeta)^{p-1}(-k\alpha \varphi)^p+\left(\frac{1+\zeta}{\zeta}\right)^{p-1}|\nabla \varphi|^p\right] .
   \end{eqnarray*}
By the definition of $\lambda_{\nu,p}(M)$, we obtain
\begin{eqnarray}
\lambda_{\nu,p}(M) & \leq&(1+\zeta)^{p-1}(-k\alpha)^p+\left(\frac{1+\zeta}{\zeta}\right)^{p-1} \frac{\int_M \exp(p \alpha s_q)|\nabla \varphi|^p \d\nu}{\int_M \exp(p \alpha s_q) \varphi^p \d\nu}\nonumber \\
&=&(1+\zeta)^{p-1}(-k\alpha)^p+\left(\frac{1+\zeta}{\zeta}\right)^{p-1} \frac{\int_{B_{\psi,q}(R) \backslash B_{\psi,q}(R-1)} \exp(p \alpha s_q)|\nabla \varphi|^p \d\nu}{\int_{B_{\psi,q}(R)} \exp(p \alpha s_q) \varphi^p \d\nu}\nonumber \\
& \leq&(1+\zeta)^{p-1}(-k\alpha)^p+(kC_3)^p\left(\frac{1+\zeta}{\zeta}\right)^{p-1} \frac{\exp(p \alpha (R-1)) \nu\left(B_{\psi,q}(R)\right)}{\int_{B_{\psi,q}(1)} \exp(p \alpha s_q) \d\nu} \nonumber\\
& \leq&(1+\zeta)^{p-1}(-k\alpha)^p+(kC_3)^p\left(\frac{1+\zeta}{\zeta}\right)^{p-1} \frac{\exp(p \alpha (R-1)) \nu\left(B_{\psi,q}(R)\right)}{\exp(p \alpha) \nu\left(B_{\psi,q}(1)\right)}\label{variational-principle-nu-1} .
\end{eqnarray}
From Theorem \ref{deformed-bishop-gromov} and
\begin{eqnarray*}
(\sqrt{cK})^{1/c}\mathcal{S}_{-cK}(R) &\leq& \int_0^R \left[\frac{1}{2}\left\{\exp(\sqrt{cK}s) - \exp(-\sqrt{cK}s)\right\}\right]^{1/c}\d s\\
&\leq& \sqrt{\frac{c}{K}}\exp\left(\sqrt{\frac{K}{c}}R\right),
\end{eqnarray*}
we deduce
\begin{eqnarray*}
   \frac{\e^{p\alpha (R-1)}\nu(B_{\psi,q}(R))}{\e^{p\alpha}\nu(B_{\psi,q}(1))}&\leq & \frac{\e^{p\alpha R}}{\e^{2p\alpha}}\frac{1}{(\sqrt{cK})^{1/c}\mathcal{S}_{-cK}(1)}\sqrt{\frac{c}{K}}\exp\left(\sqrt{\frac{K}{c}}R\right)\\
   &=& \frac{1}{\e^{2p\alpha}(\sqrt{cK})^{1/c} \mathcal{S}_{-cK}(1)}\sqrt{\frac{c}{K}}\exp\left(p\alpha R + \sqrt{\frac{K}{c}}R\right) \rightarrow 0
   \end{eqnarray*}
as $R\rightarrow \infty$. Letting $R\rightarrow \infty$ in \eqref{variational-principle-nu-1}, we obtain
\begin{equation}
\lambda_{\nu,p}(M) \leq (1 + \zeta)^{(p-1)}(-k\alpha)^p .
\end{equation}
Since $\zeta > 0$ and $\delta > 0$ are arbitrary, the theorem follows.
\end{proof}
\textbf{Acknowledgement.}

I would like to express deep appreciation to my supervisor Shin-ichi Ohta for his support, encouragement and for making a number of valuable suggestions and comments on preliminary versions of this paper.



\end{document}